\documentclass[11pt, letterpaper, reqno]{amsart}
\usepackage[utf8]{inputenc} 	
\usepackage{microtype} 			
\usepackage{geometry} 
\usepackage{amsmath}			
\usepackage{amsthm}		 		
\usepackage{amssymb}	 
\usepackage{bm}					
\usepackage{mathrsfs}			
\usepackage{soul}
\usepackage{thmtools}
\usepackage{xcolor}				
\definecolor{darkred}{rgb}{0.5, 0.2, 0.2}
\definecolor{darkblue}{rgb}{0..2, 0.2, 0.7}
\definecolor{darkgreen}{rgb}{0..2, 0.45,.2}
\definecolor{mygreen}{rgb}{0.35, 0.66,.35}
\definecolor{purp}{rgb}{0.5, 0.1,.5}
\definecolor{bettercyan}{rgb}{0.1, 0.4, 0.7}
\usepackage{tikz}				
\usepackage{tikz-3dplot}
\usetikzlibrary{patterns}
\usetikzlibrary{arrows}
\usetikzlibrary{decorations.markings,decorations.pathmorphing,shapes}
\usepackage[all]{xy}			
\usepackage{graphicx}			
\usepackage{pgfplots}			
\usepackage{enumitem} 			
\usepackage{array}
\usepackage{lmodern}			
\usepackage[T1]{fontenc}		
\usepackage{cancel}
\usepackage{comment}
\usepackage[breaklinks=true, colorlinks=true, citecolor=bettercyan, linkcolor=darkred, urlcolor=magenta]{hyperref}
\usepackage{cleveref}
\usepackage[backend=bibtex, style=alphabetic, backref=true, firstinits=true, isbn=false, date=year]{biblatex}
\addbibresource{DAlembertSelfAdjoint.bib}
\setcounter{biburlnumpenalty}{100}
\setcounter{biburllcpenalty}{100}
\setcounter{biburlucpenalty}{100}

\makeatletter
\renewbibmacro{in:}{}
\DeclareFieldFormat{pages}{#1}
\renewcommand*{\bibnamedash}{%
	\leavevmode\raise +0.6ex\hbox to 5.5ex{\hrulefill}.\space\space}

\InitializeBibliographyStyle{\global\undef\bbx@lasthash}

\newbibmacro*{bbx:savehash}{\savefield{fullhash}{\bbx@lasthash}}

\renewbibmacro*{author}{%
	\ifboolexpr{
		test \ifuseauthor
		and
		not test {\ifnameundef{author}}
	}
	{%
		\iffieldequals{fullhash}{\bbx@lasthash}
		{\bibnamedash\addcomma\space}
		{\printnames{author}}%
		\usebibmacro{bbx:savehash}%
		\iffieldundef{authortype}
		{}
		{%
			\setunit{\addcomma\space}%
			\usebibmacro{authorstrg}%
		}%
	}
	{\global\undef\bbx@lasthash}%
}
\makeatother

\setcounter{tocdepth}{1}

\geometry
{
	letterpaper,
	margin=1in,
	headheight=15pt
}
 \numberwithin{equation}{section}

\newenvironment{proposition}
{\pushQED{\qed}\propositionx}
{\popQED\endpropositionx}

\newtheorem*{theorem*}{Theorem}
\newtheorem{theorem}{Theorem}

\theoremstyle{remark}

\newenvironment{remark}
{\pushQED{\qed}\remarkx}
{\popQED\endremarkx}

\newenvironment{example}
{\pushQED{\qed}\examplex}
{\popQED\endexamplex}
%

\newcommand{\desc}{\mathrm{de,sc}}

\newcommand{\WF}{\ensuremath{\mathrm{WF}}}


\newcommand{\bbB}{\mathbb{B}}
\newcommand{\bbC}{\mathbb{C}}

\newcommand{\bbM}{\mathbb{M}}
\newcommand{\bbN}{\mathbb{N}}
\newcommand{\bbO}{\mathbb{O}}
\newcommand{\bbP}{\mathbb{P}}

\newcommand{\bbR}{\mathbb{R}}
\newcommand{\bbS}{\mathbb{S}}

\newcommand{\bbZ}{\mathbb{Z}}

\newcommand{\calA}{\mathcal{A}}

\newcommand{\calC}{\mathcal{C}}
\newcommand{\calD}{\mathcal{D}}

\newcommand{\calF}{\mathcal{F}}

\newcommand{\calI}{\mathcal{I}}

\newcommand{\calK}{\mathcal{K}}

\newcommand{\calN}{\mathcal{N}}

\newcommand{\calR}{\mathcal{R}}
\newcommand{\calS}{\mathcal{S}}

\newcommand{\calV}{\mathcal{V}}



\newcommand{\frakm}{\mathfrak{m}}


\newcommand{\bfx}{\mathbf{x}}


\newcommand{\dd}{\,\mathrm{d}}
\makeatletter
\@namedef{subjclassname@2020}{\textup{2020} Mathematics Subject Classification}
\makeatother

\newlength\octheight
\setlength\octheight{60pt}
\newlength\octheightr
\setlength\octheightr{90pt}
\newlength\arrowscale
\setlength\arrowscale{1pt}

\makeatletter
\@namedef{subjclassname@2020}{\textup{2020} Mathematics Subject Classification}
\makeatother



\title[The essential self-adjointness of the wave operator on radiative spacetimes]{The essential self-adjointness of the wave operator on radiative spacetimes}
\subjclass[2020]{Primary 35L05, 35P05. Secondary 58J47, 58J50.}

\date{October 8th, 2025 (Last update). December 6th, 2024 (First announced).}

\author{Qiuye Jia}
\email{Qiuye.Jia@anu.edu.au}
\address{Mathematical Sciences Institute, Australian National University, Canberra, ACT, Australia}

\author{Mikhail Molodyk}
\email{mam765@stanford.edu}
\address{Department of Physics and Stanford Institute for Theoretical Physics, Stanford University, California, USA}

\author{Ethan Sussman}
\email{ethan.sussman@northwestern.edu}
\address{Department of Mathematics, Northwestern University, Illinois, USA}
\begin{document}
	
\begin{abstract}
	We prove the essential self-adjointness of the d'Alembertian $\square_g$, allowing a larger class of spacetimes than previously considered, including those that arise from perturbing Minkowski spacetime by gravitational radiation. We emphasize the fact, proven by Taira in related settings, that all tempered distributions $u$ satisfying $\square_g u = \lambda u +f$ for $\lambda\in \bbC\backslash \bbR$ and $f$ Schwartz are Schwartz. The proof is fully microlocal and relatively quick given the ``de,sc-'' machinery recently developed by the third author.
\end{abstract}

\maketitle

\tableofcontents

\section{Introduction}

Consider an asymptotically flat metric $g$ on spacetime, which for simplicity we take to be diffeomorphic to $\bbR^{1,d}$, $d\geq 1$. (See below for the precise assumptions.) Then the d'Alembertian $\square_g$ is symmetric with respect to the $L^2=L^2(\bbR^{1,d},g)$-inner product, in the sense that 
\begin{equation}
\langle u, \square_g v \rangle_{L^2(\bbR^{1,d},g)}=\langle \square_gu,  v \rangle_{L^2(\bbR^{1,d},g)}
\end{equation}
for all $u,v\in \calS(\bbR^{1,d})$, where $\calS(\bbR^{1,d})$ denotes the set of Schwartz functions on spacetime.
It is of interest to show that $\square_g$, or more generally an $L^2(\bbR^{1,d},g)$-symmetric operator of the form 
\begin{equation} 
P=\square_g + \mathsf{m}^2 + L,\qquad \text{$\mathsf{m}\geq 0$ (or even $\mathsf{m}\in i\bbR$),} 
\end{equation} 
\begin{equation}
	\text{$L\in \operatorname{Diff}^1(\bbR^{1,d})$ with sufficiently decaying coefficients} 
\end{equation}  
is, when acting on the domain $C_{\mathrm{c}}^\infty(\bbR^{1,d})$ (or $\calS(\bbR^{1,d})$), essentially self-adjoint. For instance, if one wants to define the Feynman propagator as
\begin{equation}
(P-i0)^{-1}=\lim_{\varepsilon \to 0^+} (P - i \varepsilon )^{-1},
\label{eq:Feynman}
\end{equation}
then it is first necessary to make sense of $(P-i\varepsilon)^{-1}$, which, if $P$ is known to be essentially self-adjoint, is an immediate consequence of the functional calculus for self-adjoint operators. 
This line of inquiry was first followed by Derezi{\'n}ski and Siemssen in \cite{derezinski2018feynman, derezinski2019evolution, First}, which tackled e.g.\ the case when $g$ is stationary. Later works by Vasy \cite{vasy2020essential} and Nakamura and Taira \cite{nakamura2021selfadjoint-realprincipal, TairaSolo, nakamura2023remark, nakamura2023self-adjoint-asy-static} have tackled some cases of asymptotically Minkowski and asymptotically static $g$. Relevant to this paper is the fact that Vasy's arguments are microlocal and based on the Parenti--Shubin--Melrose ``scattering (sc)-calculus'' $\Psi_{\mathrm{sc}}(\bbR^{1,d})$, which consists of certain (but not all) polynomially weighted pseudodifferential operators in H\"ormander's standard calculus $\Psi_\infty(\bbR^{1,d})$. 
This choice of calculus limits the spacetimes that Vasy can consider.

The proof of essential self-adjointness begins with the easy consequence of the theory of deficiency indices (see \S\ref{sec:deficiency}) that all one needs to do is prove that 
\begin{equation} 
u\in L^2, Pu=\pm i u \Rightarrow u=0.
\end{equation} 
In this way, essential self-adjointness is reduced to a statement regarding the solvability theory of the operator $P\pm i$.
The symmetry of $P$ acting on $\calS=\calS(\bbR^{1,d})$ yields, via the usual argument that symmetric operators have only real eigenvalues on their domains, that $u\in \mathcal{S}, Pu=\pm i u \Rightarrow u=0$.
Indeed, if $u\in \calS$ and $Pu = \pm i u$, then (with $\lVert \cdot \rVert$ denoting the $L^2$ norm)
\begin{equation} 
\pm i \lVert u \rVert^2=\langle Pu, u \rangle_{L^2} = \langle u,Pu \rangle_{L^2} = \mp i \lVert u \rVert^2,
\label{eq:symmetry_calc}
\end{equation} 
which is only possible if $u=0$. So, in order to prove the essential self-adjointness of $P$, it suffices to prove that 
\begin{equation}  
u\in L^2, Pu=\pm i u \Rightarrow u \in \calS.
\end{equation}
Taira \cite[Thm.\ 1.6]{TairaSolo} has proven the stronger result (in some level of generality) that $u\in L^2, f\in \calS, Pu=\pm i u+f \Rightarrow u \in \calS$. 
This feature of the solvability theory of $P$, which we will call the Schwartz-to-Schwartz mapping property of $(P\mp i)^{-1}$, is surprisingly delicate in the variable-coefficient case. (It is rather trivial in the constant-coefficient case --- see \S\ref{sec:constant_coeff}.) 

Microlocally, it is straightforward to prove that the Schwartz-to-Schwartz property holds away from the intersection of null infinity and fiber infinity (i.e.\ infinite frequency) in the appropriate phase space. In other words, if $Pu\mp iu \in \calS$, and if $A\in \Psi_{\mathrm{sc}}^{0,0}(\bbR^{1,d})$ is an operator in the Parenti--Shubin calculus whose essential support is disjoint from the portion of fiber infinity over null infinity, then $Au\in \calS$. However, it is difficult to remove the ``microlocalizer'' $A$.  What Vasy does in  \cite{vasy2020essential} is prove a weaker and delicate borderline radial point estimate, the conclusion of which is that $u$ has some finite amount of Sobolev regularity. Vasy then proves that this exact amount of Sobolev regularity is sufficient to make the computation \cref{eq:symmetry_calc} go through. In this way, Vasy proves essential self-adjointness without proving the Schwartz-to-Schwartz property of $(P\mp i)^{-1}$.

One goal of this note is to show that, if,  instead of the Parenti--Shubin calculus, one uses the double edge-scattering (de,sc-) pseudodifferential calculus $\Psi_{\mathrm{de,sc}}$ recently introduced in \cite{sussman2023massive} by one of the authors in order to analyze the asymptotics of solutions of the Klein--Gordon equation near null infinity, then standard microlocal methods suffice to prove the Schwartz-to-Schwartz mapping property, obviating the need for Vasy's borderline estimate. In fact, we will prove that $u\in \calS', f\in \calS, Pu=\pm i u+f \Rightarrow u \in \calS$.
Moreover, we are able to prove a natural generalization of the mapping property that applies to $f$ with only a finite amount of Sobolev regularity or decay --- see \Cref{thm:variable-order}. This hierarchy of mapping properties is one respect in which our result (and not just the method) is not already contained in \cite{nakamura2021selfadjoint-realprincipal,  TairaSolo, nakamura2023remark}.

The de,sc-calculus is well-suited to this problem for the same reason it is well-suited to analyze massive waves near null infinity: the oscillations present in solutions of the Klein--Gordon equation lie at finite de,sc-frequency. We explain this perspective in connection with the asymptotics problem in \cite[\S1]{sussman2023massive}. This finitude of frequency is what allows us to replace Vasy's borderline estimate with \emph{an elliptic} estimate in which the imaginary term in $P \pm i$ provides ellipticity. The Schwartz-to-Schwartz mapping property is then proven by propagating the control provided by ellipticity throughout the characteristic set of $P$. Heuristically, $P\pm i$ is a Klein--Gordon operator with ``complex mass;'' it should therefore not be surprising that tools developed to understand the Klein--Gordon equation are useful, even when the original operator $P$ is the massless wave operator $\square_g$.

A second goal is to extend the essential self-adjointness result to a class of metrics with more singular behavior at null infinity than those considered by Nakamura--Taira and Vasy. The simplest sort of asymptotically Minkowski spacetimes have metrics whose coefficients are well-behaved on the radial compactification\footnote{This means using the embedding $\bbR^{1+d}\backslash \text{origin} \ni (t,\bfx)\mapsto ( (t^2+\lVert \bfx \rVert^2)^{-1/2}, (t,\bfx)/(t^2+\lVert \bfx \rVert^2)^{1/2} )  \in [0,\infty)\times \bbS^d$. Identifying $\bbR^{1+d}\backslash \text{origin}$ with the image of this embedding, $\{0\}\times \bbS^d$ can be thought of as the ``sphere at infinity,'' called $\infty\bbS^d$ above. } 
\begin{equation} 
	\bbM=\overline{\bbR^{1,d}} = \bbR^{1,d}\sqcup \infty \bbS^d
	\label{eq:bbM-def}
\end{equation} 
of Minkowski spacetime (at least if we ignore the north/south pole of the compactification). It is to such metrics that Vasy's and Nakamura--Taira's analysis applies. However, on solving the Einstein field equations or other quasilinear wave equations, a typical phenomenon is radiation. Radiation, which is coupled to and therefore has an effect on the metric, tends to null infinity. 
Consequently, the metric is not expected to be well-behaved on the radial compactification $\bbM$ but rather on another compactification, the ``octagon''
\begin{equation}
	\bbO = [\overline{\bbR^{1,d}} ; \text{null infinity};1/2] \hookleftarrow \bbR^{1,d}
	\label{eq:bbO-def}
\end{equation}
that results from blowing up null infinity.\footnote{The `1/2' refers to a change of smooth structure at the front face of the blowup, null infinity. This was done in \cite{sussman2023massive} to simplify some aspects of the microlocal analysis. Specifically, the oscillations present in solutions of the PDE oscillate like $\exp(i (t^2-r^2)^{1/2})$; if we fix $v=t-r$ and send $r\to\infty$, this only oscillates like $\exp(i (vr)^{1/2})$, which is oscillating more slowly than the $\exp(i \sigma r)$ waves the sc-calculus is designed to detect. 
For this reason, it is convenient to choose the smooth structure such that $1/\sqrt{r}$ becomes a boundary-defining-function in the interior of null infinity, instead of $1/r$. 
Besides this effect, the choice smooth structure can be ignored by the reader.} The resultant manifold-with-corners $\bbO$ has five boundary hypersurfaces (unless $d=1$, in which case $\bbO$ is literally an octagon), two of which are the timelike caps, one of which is spacelike infinity, and the other two of which, the front faces of the blowup, correspond to the boundaries of the Penrose diagram of Minkowski spacetime. 
Indeed, Hintz and Vasy have proven rigorously in \cite{H-V-stability} that solving the Einstein vacuum equations on $\bbR^{1,d}$ for initial data differing only slightly from the flat case
results in a metric polyhomogeneous -- i.e.\ well-behaved, in a precise sense -- on $\bbO$.\footnote{If the perturbation does not impart a mass to the spacetime, then the compactification $\bbR^{1,d}\hookrightarrow \bbO$ in the Hintz--Vasy theorem is just \cref{eq:bbO-def}. However, if the perturbation \emph{does} impart a mass to the spacetime, then the compactification must be altered slightly to one involving the mass (cf.\ \Cref{ex:astro}). The target is still $\bbO$, but the inclusion $\bbR^{1,d}\hookrightarrow \bbO$ must be adjusted. The physical reason for this is that null geodesics are not, near null infinity, approximately lines of constant $t-r$, but rather constant $t-r_*$, where $r_*$ is the Eddington--Finkelstein tortoise coordinate. Changing perspective, \cite{H-V-stability} tells us that, regardless of whether or not the perturbed spacetime is massive, the perturbed spacetime is \emph{isometric}, via an isometry close to the identity, to one on $\bbR^{1,d}$ which is polyhomogeneous with respect to the compactification \cref{eq:bbO-def} and close to Minkowski.
This is what matters for the analysis here to be applicable.}
The manifold-with-corners $\bbO$ is therefore the natural one vis-a-vis the analysis of relativistic wave equations.

One attractive feature of $\Psi_{\mathrm{de,sc}}$ is that it permits microlocal analysis directly on $\bbO$, including PDEs whose coefficients are singular on $\bbM$. The sc-calculus is not well-suited for this purpose. Another calculus similar to the de,sc-calculus is the edge-b (e,b-) calculus $\Psi_{\mathrm{e,b}}$, developed by Melrose--Vasy--Wunsch \cite{MVW} and deployed recently by Hintz--Vasy in \cite{HintzVasyScriEB} to study the massless wave equation on $\bbO$. The massless wave equation was previously analyzed on $\bbO$ by Baskin--Vasy--Wunsch in \cite{BaskinVasyWunsch, BaskinVasyWunscher}; however, in these papers, the metric is required to be well-behaved \emph{on $\bbM$}. It is the more sophisticated microlocal tools developed in the later work that allowed the handling of radiative spacetimes, which are only well-behaved on $\bbO$.

Thus, we prove: 
\begin{theorem}
	Suppose that 
	\begin{itemize}
		\item as discussed in \S\ref{sec:statement}, $g$ is non-trapping and well-behaved on $\bbO$, in the sense that 
		\begin{equation} 
		g - \eta \in \rho^{\epsilon}S^{\mathsf{0}}(\bbO ; {}^{\mathrm{de,sc}} \operatorname{Sym}^2\overline{T}^* \bbO)
		\label{eq:metric_ass_gen}
		\end{equation} 
		 holds 
		 for some $\epsilon>0$, 
		 where $\eta$ is the exact Minkowski metric and $\rho$ is a total boundary defining function of $\bbO$ (see \cref{eq:tdf})\footnote{The exact smooth structure on $\bbO$ does not matter here, so the reader may pretend that $\rho = 1/(1+t^2+r^2)^{1/2}$ without changing the class of metrics that \cref{eq:metric_ass_gen} allows.};
		\item $A$ is an $L^2(\bbR^{1,d},g)$-symmetric first-order operator\footnote{Meaning that $\langle Au,v \rangle_{L^2(\mathbb{R}^{1,d},g)}=\langle u,Av \rangle_{L^2(\mathbb{R}^{1,d},g)}$ for all Schwartz $u,v$. } sufficiently decaying in the sense that 
		\begin{equation} 
		A \in S\operatorname{Diff}_{\mathrm{de,sc}}^{1,-\mathsf{1}},
		\label{eq:A_ass}
		\end{equation} 
	\end{itemize}
	and $\mathsf{m}^2 \geq 0$. 
	Then, the differential operator $P=\square_g + \mathsf{m}^2+A$ is essentially self-adjoint on $C_{\mathrm{c}}^\infty(\bbR^{1,d})$ with respect to the $L^2(\bbR^{1,d},g)$-inner product.
	\label{thm:main_0}
\end{theorem}
\begin{remark}
	If $\mathsf{m}\in i \mathbb{R}$ (the ``tachyonic case''), then the topological structure of the characteristic set in the interior of the fibers changes. This actually does not affect our arguments, because the relevant propagation results are all at fiber infinity. Nevertheless, we have chosen to restrict attention to the physically relevant $\mathsf{m}\in \mathbb{R}$ case, for simplicity.
\end{remark}
\begin{remark}
	Because the essential self-adjointness of $P$ is equivalent to that of $\square_g+ A$, there is no loss of generality in taking $\mathsf{m}=0$. However, since we make thorough use of the results in \cite{sussman2023massive}, in which $\mathsf{m}$ must be $>0$, we will leave this parameter be, for clarity's sake. (This is not mathematically necessary.)
\end{remark}

\begin{remark}
	Here, the `$S$' in ``$S\operatorname{Diff}_{\mathrm{de,sc}}^{1,-\mathsf{1}}$'' refers to the fact that the coefficients of $A$ are only required to be conormal, not necessarily smooth, on $\bbO$; see \S\ref{subsec:calc} for details. 
	
	The above also holds if 
	\begin{equation} 
		A \in \rho^\epsilon S\operatorname{Diff}_{\mathrm{de,sc}}^{1,\mathsf{0}}
	\end{equation} 
	for some $\epsilon>0$. This complicates some principal symbols below, but the added terms do not affect the argument.
\end{remark}
\begin{remark}
	Our assumptions on the metric are coming from \cite[Thm. 1.8]{H-V-stability}, where the symbolic character of the metric is proven assuming symbolic regularity of initial data.  
	The major difference between this and spacetimes arising in the work of Christodoulou--Klainerman \cite{CK} is that the latter only require control on finitely many derivatives. 
	However, every estimate in this paper makes use of only \emph{finitely} many symbolic seminorms.\footnote{For example, in order to prove the $L^2$-boundedness of de,sc-$\Psi$DOs, one only needs a finite-order parametrix construction to apply the square root trick.}
	So, the conclusion should only require control on finitely many derivatives of the metric. 
\end{remark}

It is straightforward to construct metrics satisfying the hypotheses above that are short-range perturbations of Minkowski. Realistic astrophysical examples typically involve long-range perturbations, and these require a bit of care to fit into our framework. But, they do fit --- see \S\ref{sec:examples}. 
See \cite{Bicak0, BicakI, BicakII} for many explicit examples of radiating spacetimes. Some, e.g.\ the pp-wave spacetimes, cannot be fit into the framework here, since a plane wave is a very large perturbation of the metric. Other models, such as the Vaidya example discussed below (in \S\ref{sec:examples}), do in their domain of physical relevance, under appropriate assumptions on the global structure of the metric. 
While explicit examples may be few in number, there exists a large literature regarding the asymptotic structure of generic radiating spacetimes, originating in the works of Bondi et al.\ \cite{Bondi}, Penrose et.\ al \cite{NewmanPenrose, Penrose}, and Sachs \cite{Sachs}. See \cite{CruscielReview}\cite{Puzzle} for citations of some of the work that followed. For rigorous mathematical work on small perturbations of Minkowski spacetime, see \cite{CK}\cite{LindbladRodnianski} and the already cited \cite{H-V-stability}.

However, $\bbO$, or some closely related construction, seems to have appeared in the physics literature only recently; indeed, Comp{\`e}re, Gralla, and Wei \cite{Puzzle} have introduced what they call the ``puzzle piece diagram'' of spacetime (essentially compactification by $\bbO$), about which they claim:
\begin{quote}
	\textit{It is perhaps surprising that a fully general framework for macroscopic gravitational scattering does not already exist, given the tremendous effort devoted to the study of compact object interactions. Indeed, a framework almost exists, in that various asymptotic formalisms need only be tweaked and stitched together.} \cite[p.\ 3]{Puzzle}
\end{quote} 
``Stitched together'' means combining the various asymptotic regimes considered previously into a single compactification. Ashtekar et al.\ \cite{AshtekarI, AshtekarIII, AshtekarIV} had previously considered the corner between spacelike and null infinity. 
Instead of working on $\bbO$, physicists often work directly on the Penrose diagram $\bbP$ (see \Cref{fig:o}), in which timelike and spacelike infinity have been collapsed to various points. A metric is well-behaved (more precisely, symbolic) near points in the interior of null infinity if and only if it is well-behaved near the corresponding points at null infinity in the Penrose diagram. Well-behavedness on $\bbO$ is a reasonable criterion for what it means for a metric to be well-behaved near the \emph{corners} of the Penrose diagram. Besides this, there are several reasons for working on $\bbO$ rather than $\bbP$: 
\begin{itemize}
	\item unless $d=1$ (in which case we are working in 1+1D), the Penrose diagram is not a manifold-with-corners locally modeled smoothly on $2^k$th-ants of $\bbR^{1+d}$, which complicates analysis on it. For example, if $d=2$, the Penrose diagram is, near its timelike corners, a solid cone, not an octant of $\bbR^3$.
	\item  Even if the metric is well-behaved already on the Penrose diagram, solutions of massive wave equations will not be. For instance,  it is necessary to blow up timelike infinity in the Penrose diagram in order to resolve the asymptotics of solutions of the Klein--Gordon equation, as can be seen already in the free case \cite{HormanderNL}. This is the perspective of Comp\`ere et al. \cite{Puzzle}.
\end{itemize}
Related to the first point, microlocal tools do not directly apply to the Penrose diagram. Consequently it can be worthwhile to pass to $\bbO$ even when studying the massless wave equation \cite{HintzVasyScriEB}, despite the fact that the second point above does not apply to massless waves. However, if the reader so desires, they may assume that the metric is well-behaved on $\bbP$ (whatever precise definition one gives for ``well-behavedness'' near the corners of $\bbP$, it should imply being symbolic on $\bbO$) and view the passage to $\bbO$ as a technical device.

\begin{figure}[t!]
	\begin{tikzpicture}[scale=.75]
	\begin{scope}[shift={(-6.5,0)}]
	\draw[dashed] (-3.4,-3) -- (-3.4,3) -- (2.5,3) -- (2.5,-3) -- cycle;
	\node (0) at (-2.9,2.6) {$\bbM$}; 
	\filldraw[fill=gray!10] (0,0) circle (65pt);
	\draw (0,1) ellipse (58pt and 4pt);
	\draw[dashed] (0,0) ellipse (65pt and 4pt);
	\draw[dashed] (0,1.75) ellipse (40pt and 4pt);
	\draw[dashed] (0,-1.75) ellipse (40pt and 4pt);
	\draw (0,-1) ellipse (58pt and 4pt);
	\fill (2.05,1) circle (2pt);
	\fill (2.05,-1) circle (2pt);
	\fill (-2.05,1) circle (2pt);
	\fill (-2.05,-1) circle (2pt);
	\node (1) at (0,2.6) {Ff};
	\node (1) at (-2.7,1) {nFf};
	\node (1) at (-2.7,0) {Sf};
	\node (1) at (-2.7,-1) {nPf};
	\node (1) at (0,-2.6) {Pf};
	\end{scope}
	\draw[dashed] (-3.8,-3) -- (-3.8,3) -- (3,3) -- (3,-3) -- cycle;
	\node (0) at (-3.35,2.6) {$\bbO$}; 
	\begin{scope}[shift={(.2,0)}]  
	\coordinate (one) at (.413\octheight,\octheight) {};
	\coordinate (two) at (\octheight,.413\octheight) {}; 
	\coordinate (three) at (\octheight,-.413\octheight) {};
	\coordinate (four) at (.413\octheight,-\octheight) {};
	\coordinate (five) at (-.413\octheight,-\octheight) {};
	\coordinate (six) at (-\octheight,-.413\octheight) {};
	\coordinate (seven) at (-\octheight,.413\octheight) {}; 
	\coordinate (eight) at (-.413\octheight,\octheight) {};
	\draw[fill=gray!10] (one) -- (two) -- (three) -- (four) -- (five) -- (six) -- (seven) -- (eight) -- cycle;
	\node (Ff) at (.1,1.2\octheight) {$\mathrm{Ff}=i^+$};
	\node (nFf) at (-1.2\octheight,.81\octheight) {$\mathrm{nFf}=\calI^+$};
	\node (Sf) at (-1.42\octheight,0) {$\mathrm{Sf}=i^0$};
	\node (Pf) at (.1,-1.2\octheight) {$\mathrm{Pf}=i^-$};
	\node (nPf) at (-1.2\octheight,-.81\octheight) {$\mathrm{nPf}=\calI^-$};
	\draw[dashed] (0,-.75\octheight) ellipse (39pt and 4pt);
	\begin{scope}
		\clip (-\octheight,-.5\octheight) rectangle (+\octheight,-.413\octheight);
		\draw (0,-.413\octheight) ellipse (59.5pt and 4pt);
	\end{scope}
	\draw[dashed] (0,-.413\octheight) ellipse (59pt and 4pt);
	\draw[dashed] (0,0) ellipse (60pt and 4pt);
	\begin{scope}
		\clip (-\octheight,+.3\octheight) rectangle (+\octheight,+.413\octheight);
		\draw (0,+.413\octheight) ellipse (59.5pt and 4pt);
	\end{scope}
	\draw[dashed] (0,+.413\octheight) ellipse (59pt and 4pt);
	\draw[dashed] (0,+.75\octheight) ellipse (39pt and 4pt);
	\draw[fill=gray!10] (0,\octheight) ellipse (24.5pt and 4pt);
	\draw[fill=gray!20] (0,-\octheight) ellipse (24.5pt and 4pt);
	\end{scope} 
	\begin{scope}[shift={(6,0)}]
	\draw[dashed] (-2.9,-3) -- (-2.9,3) -- (3,3) -- (3,-3) -- cycle;
	\node (0) at (-2.5,2.6) {$\bbP$}; 
	\filldraw[fill=gray!10] (0,2.25) -- (2.25,0) -- (0,-2.25) -- (-2.25,0) -- cycle;
	\draw[dashed] (0,1) ellipse (35pt and 4pt);
	\draw[dashed] (0,0) ellipse (62pt and 4pt);
	\draw[dashed] (0,-1) ellipse (35pt and 4pt);
	\node (1) at (0,2.5) {Ff};
	\node (1) at (1.5,1.4) {nFf};
	\node (1) at (2.6,0) {Sf};
	\node (1) at (1.5,-1.4) {nPf};
	\node (1) at (0,-2.5) {Pf};
	\end{scope}
	\end{tikzpicture}
	\caption{
		(\textit{Left}) The radial compactification $\bbM$ of Minkowski spacetime, (\textit{middle}) the manifold-with-corners
		$\bbO$, and (\textit{right}) the Penrose diagram $\bbP$ of Minkowski spacetime, all with labeled faces: $\mathrm{Ff}$ is future timelike infinity, $\mathrm{nFf}$ is future null infinity, $\mathrm{Sf}$ is spacelike infinity, $\mathrm{nPf}$ is past null infinity, and $\mathrm{Pf}$ is past timelike infinity. The compactification $\bbO$ can be constructed by blowing up the corners of the Penrose diagram or blowing up null infinity in the radial compactification. In this sense, $\bbO$ is the simplest compactification of Minkowski spacetime refining both the radial and Penrose compactifications. Physicists use the notation $i^\pm,i^0,\calI^\pm$ to label the various asymptotic regimes. We have provided the translation in the middle panel. In all three figures, time is increasing vertically and $r$ is increasing moving away from the vertical midline.}
	\label{fig:o}
\end{figure}

It is worth pointing out that we do not allow perturbations of the Minkowski metric of the form $\eta+\delta \eta$ for $\delta \eta$ static.\footnote{In \cite{nakamura2023self-adjoint-asy-static}, Nakamura and Taira allow time-dependent perturbations of static spacetimes, but only with compact Cauchy hypersurfaces.} The physical significance of this limitation will be seen in \S\ref{sec:examples}.
From our perspective, asymptotically flat metrics with static perturbations are badly behaved near the north/south pole of the radial compactification of Minkowski spacetime, and neither the Parenti--Shubin calculus $\Psi_{\mathrm{sc}}$ nor $\Psi_{\mathrm{de,sc}}$ can tolerate this. Fortunately, there exist extensions of $\Psi_{\mathrm{sc}}$ designed to handle such perturbations. For example, Baskin, Doll, and Gell-Redman have used the ``3-body-sc''-calculus to analyze the Klein--Gordon equation with an asymptotically static potential \cite{3bodysc,3bodysc-Feynman}. We expect that it is possible to use their tools to extend the results of this paper to allow static perturbations of the metric. Because the problems we are concerned with here are delicate only near null infinity, and because the north/south pole of the radial compactification of spacetime is disjoint from null infinity, we do not believe that any major obstacles exist to combining the de,sc- and 3-body-sc- analyses. Thus, a ``de,sc,3-body-'' analysis should be possible, carried out with a corresponding pseudodifferential calculus. The result would be an extension of our main theorems.

Let us close this introduction by noting that, in recent work, Dang, Vasy, and Wrochna \cite{DangVasyWrochna2024dirac} apply a second microlocal approach to develop a functional calculus for squares of Dirac operators. Their tools, applied to the problem at hand, yield an independent and arguably simpler proof of the Schwartz-to-Schwartz mapping property of $(P\pm i)^{-1}$ but do not allow more general metrics than those considered already by Vasy. This follows previous work by Dang and Wrochna \cite{DW-residues, DW-complex} developing a Lorentzian spectral action principle for gravity, for which one needs complex powers of $P\pm i \varepsilon$ to be well-defined, as  they are if $P$ is essentially self-adjoint. In the context of variational principles for gravity, it is natural to work with classes of metrics which include Einstein perturbations of Minkowski space. Our methods are able to handle these (typically radiative) spacetimes.\footnote{We thank Micha{\l} Wrochna for bringing this perspective to our attention.}

\begin{remark}[Alternative definition of Feynman propagators]
	Below, the Schwartz-to-Schwartz mapping property of $(P\pm i)^{-1}$ is proven using propagation estimates for $P\pm i$, where the sign of the imaginary part determines direction of propagation. Concatenating propagation estimates for $P$ itself as in \cite{sussman2023massive} (where now one can choose the direction of propagation independently in the two sheets of the characteristic set, resulting in four possible choices) yields regularity results similar to \Cref{thm:variable-order}. This leads to four Fredholm realizations of $P$, following the general approach to Fredholm theory for non-elliptic equations originating in \cite{Vasy-AH-KdS}. If we propagate in the same direction as required for $P-i$, the resulting inverse of $P$ is the Feynman propagator --- cf. \cref{eq:Feynman}. This approach to defining Feynman propagators is taken by Gell-Redman, Haber, and Vasy \cite{GR-H-V-Feynman}, working on asymptotically Minkowski spacetimes which are well-behaved on $\bbM$. For massive waves, the analogous construction on the more general spacetimes considered here is the subject of the work \cite{M-V-Feynman}. The paper \cite{3bodysc-Feynman} already mentioned carries out this construction for massive waves on non-radiative spacetimes but allowing for an asymptotically static potential. Cf.\ \cite{GerardWrochna1, GerardWrochna2} for other results on Fredholm theory in the massive case and \cite{DG-propagators} for a discussion of the relationship between different notions of Feynman propagators.
\end{remark}

\begin{remark}[Freedom in choice of compactification]
As discussed in \cite[Section 6]{M-V-Feynman}, the Schwartz-to-Schwartz mapping property of $(P+\lambda)^{-1}$ for $\lambda\notin\bbR$ implies a sort of rigidity property for the resolvent itself and the Feynman/anti-Feynman propagator as its limit as $\Im \lambda \to  0^\pm $. While it is convenient to state assumptions on the asymptotic structure of spacetime by starting with a compactification from the outset, there may be many ways to attach the boundary to the spacetime so that the metric satisfies the required assumptions. Therefore, one may wonder to what extent any constructions are determined by the metric or depend on the choice of compactification. The Schwartz-to-Schwartz mapping property of the resolvent shows that, as long as the Schwartz spaces of two different compactifications match, the resolvents and Feynman propagators obtained from the de,sc-analysis on these compactifications match as well.
\end{remark}

\begin{remark}[e,b vs.\ de,sc]
	The operator $\square+\lambda$ can be analyzed as either as a e,b- operator or a de,sc- operator. However, only one is useful. If $\lambda\neq 0$, then $\square+\lambda$ is degenerate as an e,b-operator (because $\square$ has decay order $-2$ at null infinity as an edge operator, whereas $\lambda$ is order $0$, so there is a mismatch at the corner of the e,b-cotangent bundle), however the de,sc-analysis goes through. On the other hand, if $\lambda=0$, then this issue is not present, and moreover the de,sc-analysis degenerates for other reasons (the same reason why the sc-analysis of the Euclidean Laplacian $\triangle$ degenerates). This is why Hintz--Vasy used in \cite{HintzVasyScriEB} the e,b-calculus to analyze the massless wave equation, whereas Sussman used in \cite{sussman2023massive} the de,sc-calculus to analyze the Klein--Gordon equation.  
	
	In the present paper, the key value of $\lambda$ is $\lambda=\mathsf{m}^2\pm i$. This is nonzero even when $\mathsf{m}=0$, so the de,sc-calculus is used.
\end{remark}

\subsection*{Acknowledgements}

We are grateful to Dean Baskin, Jan Derezi\'nski, Kouichi Taira, Micha{\l} Wrochna, Andr\'as Vasy, and the anonymous referees for comments. We also thank the organizers of MAQD 2024 at Northwestern University. It was at this conference that this work began, inspired by a talk given by Shu Nakamura on his work on this problem. Q.J. is supported by the Australian Research Council through grant \texttt{FL220100072}. M.M. is supported by the National Science Foundation under grant number \texttt{PHY-2310429}. E.S.\ is supported by the National Science Foundation under grant number \texttt{DMS-2401636}.

\section{Some setup}
\label{sec:statement}

\subsection{Statement of main theorem}

By asymptotically Minkowski metric, we mean a Lorentzian metric $g$ on $\bbR^{1,d}$ such that \cref{eq:metric_ass_gen} holds. Here, 
\begin{equation}
\rho=\prod_{\mathrm{f}} \rho_{\mathrm{f}}, \label{eq:tdf}
\end{equation}
where the product is over the boundary hypersurfaces. 
In \cref{eq:metric_ass_gen}, $S^{\mathsf{0}}$ is the space of zeroth-order conormal symbols on $\bbO$, i.e.\ functions lying in $L^\infty$ after the application of arbitrarily many vector fields tangent to the boundary of $\bbO$.\footnote{See \cite{CruscielReview} for forceful justification for considering symbolic, or at least fairly general polyhomogeneous, metrics, rather than just those satisfying \cref{eq:metric_ass}.}
\begin{remark}
	The assumption \cref{eq:metric_ass_gen} is weaker than the corresponding assumption in \cite{sussman2023massive},
	\begin{equation}
		g - \eta \in \rho^2 C^\infty(\bbO ; {}^{\mathrm{de,sc}} \operatorname{Sym}^2\overline{T}^* \bbO).
		\label{eq:metric_ass}
	\end{equation}
	In this previous work, the goal was the production of asymptotic expansions, so stronger assumptions were convenient, but only used in \cite[\S7]{sussman2023massive}.\footnote{The precise statements in \cite[\S4]{sussman2023massive} also rely on the stronger assumption \cref{eq:metric_ass}, but only the weak dynamical structure they encode is required for \cite[\S5, \S6]{sussman2023massive}. This is one thing that \cite{M-V-Feynman} shows.} In work by the second author and Vasy \cite{M-V-Feynman}, the germane portion of the analysis in \cite[\S5, \S6]{sussman2023massive} is extended to metrics satisfying only \cref{eq:metric_ass_gen}. The e,b-analysis by Hintz and Vasy of massless waves near null infinity \cite{HintzVasyScriEB} was also carried out in this (and even greater) generality. Solutions of the Einstein vacuum equations with initial data close to those of Minkowski space generally satisfy \cref{eq:metric_ass_gen} but not the stronger \cref{eq:metric_ass}; see \cite[Theorem 7.1]{H-V-stability}. 
	
	In this paper, we use only results in \cite[\S5]{sussman2023massive}, not \cite[\S7]{sussman2023massive}, so we are permitted to assume only \cref{eq:metric_ass_gen}.
\end{remark}

In addition, we assume that $g$ is non-trapping, in the sense that every null geodesic asymptotes to the boundary of $\bbO$. Since $g$ is asymptotically flat, this just amounts to a restriction on the geodesic flow in the interior.

Note that $L^2(\bbR^{1,d},g) = L^2(\bbR^{1,d})$ at the level of topological vector spaces. So, unless the specific inner product is relevant, we will just write $L^2$.

We now consider differential operators $P$ of the form specified in \Cref{thm:main_0}: $P=\square_g + \mathsf{m}^2 + A$, where $\mathsf{m}\geq 0$, $\square_g$ is the d'Alembertian, and $A$ is as in the theorem. 
(The theorem is not less interesting if $A=0$.) Our main result is:

\begin{theorem}[Taira's mapping property, for radiating metrics] 
	Fix \emph{nonreal} $\lambda\in \bbC$. Then, if $u\in \calS'(\bbR^{1,d})$ solves $Pu +\lambda u = f$ for 
	$f\in \mathcal{S}$, we have $u\in \mathcal{S}$.
	\label{thm:main}
\end{theorem}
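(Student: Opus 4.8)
I would run a microlocal bootstrap in the de,sc-calculus of \cite{sussman2023massive}, mirroring the way one establishes global regularity for $\square_g+\mathsf m^2$ itself, the only new point being that the nonreality of $\lambda$ converts the one delicate radial-point phenomenon into an elliptic estimate. There is no loss in taking $\mathsf m>0$ (given $\mathsf m=0$, rewrite $Pu+\lambda u=f$ as $(\square_g+\mathsf m_0^2+A)u+(\lambda-\mathsf m_0^2)u=f$ for a fixed $\mathsf m_0>0$, so the $\mathsf m>0$ case with the still-nonreal parameter $\lambda-\mathsf m_0^2$ applies), so that all the estimates of \cite[\S5]{sussman2023massive} are available verbatim. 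Recalling that $\calS(\bbR^{1,d})=\bigcap_{s,\ell}H^{s,\ell}_{\mathrm{de,sc}}(\bbO)$ -- the intersection over all differential orders $s$ and all multiweights $\ell$ (one decay order per boundary hypersurface of $\bbO$) -- and that $u\in\calS'$ forces $u\in H^{s_0,\ell_0}_{\mathrm{de,sc}}$ for some fixed (possibly very negative) $(s_0,\ell_0)$, it suffices to prove $u\in H^{s,\ell}_{\mathrm{de,sc}}$ for every $(s,\ell)$; I would do this by controlling $\operatorname{WF}^{s,\ell}_{\mathrm{de,sc}}(u)$ throughout ${}^{\mathrm{de,sc}}\overline{T}^*\bbO$.

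The decisive step is the elliptic estimate. At every point at \emph{finite} de,sc-frequency -- over $\bbO^\circ$ and over each boundary face -- the leading part of the de,sc-symbol of $P+\lambda=\square_g+\mathsf m^2+A+\lambda$ is $\sigma_{\mathrm{de,sc}}(\square_g)+\mathsf m^2+\lambda$ modulo terms decaying at the face in question (the contribution of $A\in S\operatorname{Diff}^{1,-\mathsf 1}_{\mathrm{de,sc}}$ being subprincipal there thanks to its extra decay), and this never vanishes because $\sigma_{\mathrm{de,sc}}(\square_g)+\mathsf m^2$ is real-valued while $\operatorname{Im}\lambda\neq 0$; hence $P+\lambda$ is de,sc-elliptic at all finite de,sc-frequency. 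The crucial payoff is at the intersection of fiber infinity with null infinity: the characteristic structure of $\square_g$ that obstructs the scattering-calculus argument of \cite{vasy2020essential} -- a degenerate radial set -- sits, by design of the de,sc-calculus (the same feature that lets it track the bounded-de,sc-frequency oscillations of massive waves near $\mathcal I^\pm$), at finite de,sc-frequency, and is therefore simply killed by this elliptic estimate. Concretely, de,sc-elliptic regularity gives $\operatorname{WF}^{s,\ell}_{\mathrm{de,sc}}(u)\subseteq\operatorname{Char}_{\mathrm{de,sc}}(\square_g)\subseteq{}^{\mathrm{de,sc}}S^*\bbO$ for every $(s,\ell)$, modulo the harmless a priori term $\lVert u\rVert_{H^{s_0,\ell_0}}$ (and $\lVert f\rVert$ in any de,sc-norm, which is finite since $f\in\calS$).

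Next I would propagate this control through $\operatorname{Char}_{\mathrm{de,sc}}(\square_g)$ -- the de,sc-compactified null-covector cone at fiber infinity, where $\lambda$ is lower order -- using the estimates already in \cite[\S5]{sussman2023massive}: the (by then non-degenerate) radial-point estimates for $\square_g$ over the timelike caps $\mathrm{Ff},\mathrm{Pf}$ and over spacelike infinity $\mathrm{Sf}$, whose regularity/weight thresholds cause no trouble because the only a priori input they require is $u\in H^{s_0,\ell_0}_{\mathrm{de,sc}}$, together with real-principal-type propagation of the de,sc-wavefront set along the null-bicharacteristic flow. The non-trapping hypothesis -- every null geodesic limits to $\partial\bbO$ -- ensures that every bicharacteristic in $\operatorname{Char}_{\mathrm{de,sc}}(\square_g)$ can be flowed into a neighborhood of one of those radial sets, hence into the region already controlled by ellipticity. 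Running these estimates (iterating at the radial sets in the usual way to clear the thresholds) yields $\operatorname{WF}^{s,\ell}_{\mathrm{de,sc}}(u)=\emptyset$, i.e.\ $u\in H^{s,\ell}_{\mathrm{de,sc}}$, for every $(s,\ell)$, so $u\in\calS(\bbR^{1,d})$.

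The step I expect to demand the most care is the elliptic estimate near null infinity: writing down the de,sc-normal operator (equivalently, the finite-de,sc-frequency symbol) of $\square_g$ at $\mathcal I^\pm$ and verifying uniform ellipticity of $P+\lambda$ there -- in particular, that the metric perturbation allowed by \cref{eq:metric_ass_gen} and the term $A$ of \cref{eq:A_ass} are genuinely subprincipal at \emph{every} face, so that no characteristic set, radial set, or threshold quantity is disturbed -- and then the bookkeeping of the five (or, when $d=1$, eight) multiweights, so that after propagation one really does land in $H^{s,\ell}_{\mathrm{de,sc}}$ with $(s,\ell)$ arbitrary and not merely constrained by $(s_0,\ell_0)$.
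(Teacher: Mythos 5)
Your central observation is exactly the one the paper is built on: the delicate radial sets $\calR$ sit at \emph{finite} de,sc-frequency, so for nonreal $\lambda$ the operator $P+\lambda$ is de,sc-elliptic there, giving $\WF_{\desc}^{m,\mathsf{s}}(u)\subseteq\Sigma\cap\mathrm{df}$, after which everything is a concatenation of the fiber-infinity estimates of \cite[\S5]{sussman2023massive}. Up to that point your plan coincides with the proof via \Cref{prop:main}. (A small imprecision: what contains the wavefront set is the characteristic set of $P+\lambda$, namely $\Sigma\cap\mathrm{df}$, not that of $\square_g+\mathsf{m}^2$, which still contains the mass shells at finite frequency; also the fiber-infinity radial sets $\calN,\calC,\calK,\calA$ lie over null infinity and its corners, not over the caps and $\mathrm{Sf}$ as you describe.)

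The step that would fail as written is the propagation stage. For $\Im\lambda\neq 0$ you do \emph{not} have two-sided real-principal-type propagation: in the positive-commutator argument the extra term $-2\Im(\lambda)\lVert \check{A}u\rVert^2$ can be discarded only when its sign matches the direction of propagation, so control propagates only one way along $\mathsf{H}_p$ (forward for one sign of $\Im\lambda$, backward for the other) --- this is the complex-absorption mechanism, and it is not a convenience: the offending term is stronger by half an order in weight than what one is estimating, so it cannot be absorbed. Your appeal to non-trapping (``every bicharacteristic can be flowed into the elliptic region'') ignores this: within $\Sigma^\pm\cap\mathrm{df}$ the flow has a source $\calN_-$ and a sink $\calN_+$, and the admissible direction forces a specific concatenation, starting by propagating into the source from the elliptic fiber interior and ending with the saddle-point estimate at the sink. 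Relatedly, the saddle points carry no absolute thresholds but they do impose relative inequalities among $m$ and the five weights, \cref{eq:ineq}, and since the admissible direction is the same relative to the flow in both sheets --- hence opposite in time --- these inequalities are incompatible between $\Sigma^+$ and $\Sigma^-$ for any single constant-order space. So ``the thresholds cause no trouble beyond $u\in H^{s_0,\ell_0}$'' is too quick: one must either run the scheme twice with two reversed anisotropic weight hierarchies (e.g.\ $s_{\mathrm{Ff}}=s_{\mathrm{nFf}}=N$, $s_{\mathrm{Sf}}=2N$, $s_{\mathrm{Pf}}=s_{\mathrm{nPf}}=4N$, $m=2N$, and its time-reverse), which is legitimate precisely because $f\in\calS$ lies in every space, or work with variable orders as in \Cref{thm:variable-order}. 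With the directionality and this two-sheet bookkeeping added, your argument becomes the paper's proof.
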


The proof is in the next section, \S\ref{sec:proof}. Note that \Cref{thm:main_0} follows immediately as a corollary, as discussed in the introduction.

\subsection{The de,sc-calculus and related objects}
\label{subsec:calc}
We refer to \cite{sussman2023massive} for a full description of $\Psi_{\mathrm{de,sc}}$. We refer especially to the introduction of that paper, which was intended to be readable. 
For the typical reader, whom we expect to be acquainted with microlocal analysis but not this particular calculus, we provide a brief summary here. In a few sentences: $\Psi_{\mathrm{de,sc}}$ is a calculus of pseudodifferential operators that arise via quantizing symbols on a particular compactification ${}^{\mathrm{de,sc}}\overline{T}^* \bbO$, 
\begin{equation}
	\underbrace{\bbB^{1+d}}_{\text{fiber}}\times  \underbrace{\bbO}_{\text{base}} \cong {}^{\mathrm{de,sc}}\overline{T}^* \bbO \hookleftarrow T^* \bbR^{1,d}
	\label{eq:desc_phase}
\end{equation}
of the cotangent bundle $T^* \bbR^{1,d}$ of spacetime. This compactification 
is a ball bundle over the octagonal compactification $\bbO\hookleftarrow \bbR^{1,d}$ of spacetime; distinguishing different base points at null infinity allows a more refined analysis than would otherwise be possible. The de,sc-calculus is fully symbolic, which means that, like the sc-calculus,\footnote{But unlike the b- or edge- calculi.} operators therein are captured by principal symbols modulo compact errors (operators which are \emph{both} smoothing \emph{and} decay inducing). Consequently, the de,sc-calculus works very similarly to the sc-calculus. In fact, the two are canonically identifiable away from null infinity --- this is what the ``sc'' in ``de,sc'' refers to. The ``de'' refers to the particular boundary fibration structure at null infinity, the \emph{double edge} structure, which is reverse-engineered from the formula for the d'Alembertian in lightcone coordinates.\footnote{The double edge structure and its associated pseudodifferential calculus was introduced by Lauter--Moroianu \cite{LauterMoroianu}. Their motivation seems to be unrelated to that here.} The double edge structure is related to the edge structure in the same way that the sc- structure is related to the b- structure. This is why the de- structure is fully symbolic.

We will use liberally notation introduced in \cite[\S1, \S2]{sussman2023massive}. 
For example, for $m\in \bbR, \mathsf{s} = (s_{\mathrm{Pf}},s_{\mathrm{nPf}},s_{\mathrm{Sf}},s_{\mathrm{nFf}},s_{\mathrm{Ff}} ) \in \bbR^5$, 
\begin{equation}
H_{\mathrm{de,sc}}^{m,\mathsf{s}} = \{u\in \calS'(\bbR^{1,d}) : Lu\in L^2(\bbR^{1,d})\text{ for all }L\in \Psi^{m,\mathsf{s}}_{\mathrm{de,sc}}  \}
\label{eq:Sobolev_def}
\end{equation}
is the scale of Sobolev spaces associated to the calculus $\Psi_{\mathrm{de,sc}}$ of de,sc-pseudodifferential operators.
Denoting\footnote{When the $s_{\mathrm{f}}$ are made \emph{variable}, the right-hand side of \cref{eq:tbdf} becomes a function on phase space, and $\rho^{-\mathsf{s}}$ is interpreted as its quantization.} 
\begin{equation}
	\rho^{-\mathsf{s}} = \rho_{\mathrm{Pf}}^{-s_{\mathrm{Pf}} } \rho_{\mathrm{nPf}}^{-s_{\mathrm{nPf}} }\rho_{\mathrm{Sf}}^{-s_{\mathrm{Sf}} }\rho_{\mathrm{nFf}}^{-s_{\mathrm{nFf}} }\rho_{\mathrm{Ff}}^{-s_{\mathrm{Ff}} }
	,
	\label{eq:tbdf}
\end{equation}
where 
$\rho_{\mathrm{f}} \in C^\infty(\bbO)$ is a boundary-defining-function for the boundary hypersurface $\mathrm{f}$, the space
$\smash{H_{\mathrm{de,sc}}^{m,\mathsf{s}}}$ is, for $m\geq 0$,\footnote{Note that `$m$' is a Sobolev order, whereas `$\mathsf{m}$' is a coefficient in $P$.} equipped with the norm  
\begin{equation}
\lVert u \rVert_{H^{m,\mathsf{s}}_{\mathrm{de,sc}}} = \lVert \rho^{-\mathsf{s}} u \rVert_{L^2} + \lVert \Lambda^{m,\mathsf{s}} u \rVert_{L^2}, 
\label{eq:misc_026}
\end{equation}
or one equivalent to it. Here, $\Lambda^{m,\mathsf{s}}$ is an elliptic (not only in the differential sense, but in all decay senses as well) $\smash{\Psi^{m,\mathsf{s}}_{\mathrm{de,sc}}}$-operator. If $\smash{\Lambda^{m,\mathsf{s}}}$ is injective, then the first term on the right-hand side of \cref{eq:misc_026} can be removed, in which case the norm works for $m<0$. 


For the statement of our main theorem, it suffices to consider only $m\in \bbN$. When $m\in \bbN$, \cref{eq:Sobolev_def} may be re-expressed in terms of the $C^\infty(\bbO)$-algebra $\operatorname{Diff}_{\mathrm{de,sc}}$ of de,sc- differential operators:
\begin{equation}
H_{\mathrm{de,sc}}^{m,\mathsf{s}} = \{u\in \calS'(\bbR^{1,d}) : Lu\in L^2(\bbR^{1,d})\text{ for all }L\in \operatorname{Diff}^{m,\mathsf{s}}_{\mathrm{de,sc}}  \}.
\end{equation}
Concretely, $\operatorname{Diff}_{\mathrm{de,sc}}^{m,\mathsf{0}}$ is the $C^\infty(\bbO)$-subalgebra of $\operatorname{Diff}^m(\bbR^{1,d})$ generated by the set $\calV_{\mathrm{de,sc}}$ of \emph{de,sc-vector fields} (see below), and 
\begin{align}
\begin{split} 
\operatorname{Diff}_{\mathrm{de,sc}}^{m,\mathsf{s}} &= \rho^{-\mathsf{s}} \operatorname{Diff}_{\mathrm{de,sc}}^{m,\mathsf{0}}.
\end{split} 
\end{align}
Then, an equivalent norm to the one above is
\begin{equation} \label{eq:def-desc-norm-integer-m}
	\lVert u \rVert_{H^{m,\mathsf{s}}_{\mathrm{de,sc}}} = \lVert \rho^{-\mathsf{s}} u \rVert_{L^2} + 
	\sum_{\Lambda} \lVert \rho^{-\mathsf{s}} \Lambda u \rVert_{L^2},
\end{equation}
where $\Lambda$ runs over all $m$-fold products of generators of $\calV_{\mathrm{de,sc}}$.

The set $\calV_{\mathrm{de,sc}}$  is the $C^\infty(\bbO)$-submodule of $\calV(\bbR^{1,d})$ generated by vector fields of the form $\chi_1 \partial_t,\chi_1 \partial_{x_j}$ for $\chi_1\in C^\infty(\bbM)$ vanishing near null infinity and some other vector fields near null infinity. In order to specify these, let
\begin{itemize}
	\item $\mathrm{nf}$ stand for any one of $\mathrm{nf}\in \{\mathrm{nPf},\mathrm{nFf}\}$, this standing for ``null face,'' 
	\item and $\mathrm{Of}\in \{\mathrm{Pf},\mathrm{Sf},\mathrm{Ff}\}$ stand for one of the other faces (the `O' standing for ``other''). 
\end{itemize}
Then, the remaining vector fields which generate $\calV_{\mathrm{de,sc}}$ are those of the form 
\begin{equation} 
\chi_2 \rho_{\mathrm{nf}}^2 \rho_{\mathrm{Of}} \partial_{\rho_{\mathrm{nf}}} , \chi_2 \rho_{\mathrm{nf}}\rho_{\mathrm{Of}}^2 \partial_{\rho_{\mathrm{Of}}}, \chi_2 \rho_{\mathrm{nf}}^2\rho_{\mathrm{Of}} \partial_{\theta_j}
\end{equation} 
for $j\in \{1,\dots,d-1\}$, where 
\begin{itemize}
	\item $\chi_2 \in C^\infty(\bbO)$ is supported near a single corner $\mathrm{nf}\cap\mathrm{Of}$ of $\bbO$,
	\item we are defining the partial derivatives using a coordinate system $\rho_{\mathrm{nf}},\rho_{\mathrm{Of}},\theta_1,\dots,\theta_{d-1}$ where $\theta_1,\dots,\theta_{d-1}$ denotes a local coordinate chart on $\bbS^{d-1}_{\bfx/r}$.
\end{itemize}
(That is, a vector field $V\in \calV(\bbR^{1,d})$ lies in $\calV_{\mathrm{de,sc}}$ if and only if it is a scattering vector field at $\mathrm{Pf},\mathrm{Sf},\mathrm{Ff}$ and a double edge (de) vector field at $\mathrm{nPf},\mathrm{nFf}$, with the de- and sc- structures being compatible at the corners in the natural sense.) 
An equivalent global definition is 
\begin{multline}
	\calV_{\mathrm{de,sc}} = \operatorname{span}_{C^\infty(\bbO)}\{ \rho_{\mathrm{nPf}}\rho_{\mathrm{nFf}} \partial_t,\rho_{\mathrm{nPf}}\rho_{\mathrm{nFf}} \partial_{x_j},  \rho_{\mathrm{nPf}}^{-1}\rho_{\mathrm{nFf}}^{-1} \chi_3 (\partial_{|t|} + \partial_r), \chi_3 r^{-1}\partial_{\theta_k} \\ :j\leq d\text{ and } k\leq d-1  \}
\end{multline}
for $\chi_3\in C^\infty(\bbM)$ supported away from $\operatorname{cl}_\bbM\{tr=0\}$ (i.e.\ the union of the initial hypersurface, the $t$-axis, and their ``boundary points'' in $\bbM$)\footnote{That way, $\chi_3(\partial_{|t|}+\partial_r)$, $\chi_3 r^{-1} \partial_{\theta_k}$, defined using the coordinates $t,r,\theta_1,\dots,\theta_{d-1}$, are smooth vector fields.} and identically equal to $1$ near null infinity.
In \cite{sussman2023massive}, the pseudodifferential calculus $\Psi_{\mathrm{de,sc}}$ is constructed by ``quantizing'' the Lie algebra $\calV_{\mathrm{de,sc}}$ in local coordinates. Near points in the interiors of $\mathrm{nPf},\mathrm{nFf}$, this gives the double edge calculus. Near points in the interiors of $\mathrm{Pf},\mathrm{Sf},\mathrm{Ff}$, this gives the sc-calculus.

The set 
\begin{equation}
	S \operatorname{Diff}_{\mathrm{de,sc}}^{m,\mathsf{s}}=\operatorname{span}_{S^{\mathsf{0}}(\bbO)}\operatorname{Diff}_{\mathrm{de,sc}}^{m,\mathsf{s}}
\end{equation}
is the $S^{\mathsf{0}}(\bbO)$-module generated by the de,sc- operators of the corresponding order. In other words, it contains de,sc-differential operators with coefficients which are merely symbolic on $\bbO$ (with weight $\rho^{-\mathsf{s}}$), rather than smooth (times $\rho^{-\mathsf{s}}$).

Note that $H_{\mathrm{de,sc}}^{0,\mathsf{0}} = L^2(\bbR^{1,d})$, and
\begin{equation} 
\bigcap_{m\in \bbN,\mathsf{s}\in \bbR^5} H_{\mathrm{de,sc}}^{m,\mathsf{s}} = \calS(\bbR^{1,d}), \quad \bigcup_{m\in \bbZ,\mathsf{s}\in \bbR^5} H_{\mathrm{de,sc}}^{m,\mathsf{s}} = \calS'(\bbR^{1,d}).
\end{equation} 
The order $m$ is the amount of differentiability, and the pentuple $\mathsf{s} = (s_{\mathrm{Pf}},s_{\mathrm{nPf}},s_{\mathrm{Sf}},s_{\mathrm{nFf}},s_{\mathrm{Ff}} )$ lists the decay rate, relative to $L^2$, at each of the five boundary hypersurfaces of $\bbO$. The decay rates are listed in the order: past timelike infinity $\mathrm{Pf}$, past null infinity $\mathrm{nPf}$, spacelike infinity $\mathrm{Sf}$, future null infinity $\mathrm{nFf}$, future timelike infinity $\mathrm{Ff}$. 

The de,sc-vector fields yield a vector bundle ${}^{\mathrm{de,sc}}T \bbO \twoheadrightarrow \bbO$ whose smooth sections are precisely the elements of $\calV_{\mathrm{de,sc}}$. Dualizing yields a new cotangent bundle ${}^{\mathrm{de,sc}}T^* \bbO$. 
The (compact) de,sc-phase space in \cref{eq:desc_phase}
is obtained by radially compactifying the fibers of that bundle. The compactification creates a boundary face $\mathrm{df}$ (``fiber infinity''), which is topologically $\bbS^{d}\times \bbO$; this is the de,sc- cosphere bundle.\footnote{The `d' here stands for ``differential.''} Standard microlocal objects such as wavefront sets, elliptic/characteristic sets, essential supports, etc.\ are subsets of the boundary of this phase space, and symbols are functions on the interior conormal to the boundary. For the purposes of defining microlocalizers, it suffices to restrict attention to \emph{classical} de,sc-symbols of order zero, which are just smooth functions on ${}^{\mathrm{de,sc}}\overline{T}^* \bbO$, where it is important to note that smoothness means smoothness all the way up to and including the boundary. (That is, embedding $\bbO$ in a larger $\bbR^{1+d}$ as the surface of revolution of an octagon, a function on $\bbB\times \bbO$ is smooth if and only if it extends to a smooth function on some open neighborhood in $(\bbR^{1+d})^2$.) 

A classical pseudodifferential operator $A \in \Psi_{\mathrm{de,sc}}^{0,\mathsf{0}}$ is elliptic at a point $q\in \partial ({}^{\mathrm{de,sc}}\overline{T}^* \bbO)$ if and only if its principal symbol \begin{equation} 
\sigma_{\mathrm{de,sc}}^{0,\mathsf{0}}(A) \in C^\infty({}^{\mathrm{de,sc}}\overline{T}^* \bbO)  
\end{equation} 
is nonvanishing there. (Strictly speaking, $\sigma_{\mathrm{de,sc}}^{0,\mathsf{0}}(A)$ is an equivalence class of symbols modulo lower order symbols. A usual abuse of notation is to conflate principal symbols with representatives thereof.)

Let us elaborate on the de,sc- notion of wavefront set, associated with the scale of de,sc-Sobolev spaces. For each $u\in \calS'(\bbR^{1,d})$, its de,sc-wavefront set is a subset
\begin{equation} 
\operatorname{WF}_{\mathrm{de,sc}}^{m,\mathsf{s}}(u) \subseteq \partial({}^{\mathrm{de,sc}}\overline{T}^* \bbO).
\end{equation} 
This measures, microlocally, the obstruction to $u$ lying in $H^{m,\mathsf{s}}_{\mathrm{de,sc}}$:
\begin{equation}
q \notin \operatorname{WF}_{\mathrm{de,sc}}^{m,\mathsf{s}}(u) \quad
\text{ iff } \quad \exists A \in \Psi_{\mathrm{de,sc}}^{0,\mathsf{0}} \text{ that is elliptic at } q \text{ and } Au \in H^{m,\mathsf{s}}_{\mathrm{de,sc}}.
\end{equation}
(And it suffices to restrict to classical $A$ in this definition.)
So, 
\begin{equation} 
u\in H^{m,\mathsf{s}}_{\mathrm{de,sc}} \iff \operatorname{WF}_{\mathrm{de,sc}}^{m,\mathsf{s}}(u)=\varnothing.
\end{equation} 
Regarding the interpretation of this notion of wavefront set:
\begin{itemize}
	\item over the interior of $\bbO$, $\operatorname{WF}_{\mathrm{de,sc}}^{m,\mathsf{s}}$ is just the ordinary wavefront set $\operatorname{WF}^m$, so it detects failures of smoothness, along with the directions in which those failures are most severe;
	\item  over  the interiors of $\mathrm{Pf},\mathrm{Sf},\mathrm{Ff}$, $\operatorname{WF}_{\mathrm{de,sc}}^{m,\mathsf{s}}$ is just the sc-wavefront set $\operatorname{WF}_{\mathrm{sc}}^{m,s_{\mathrm{Pf}}}$, $\operatorname{WF}_{\mathrm{sc}}^{m,s_{\mathrm{Sf}}}$, $\operatorname{WF}_{\mathrm{sc}}^{m,s_{\mathrm{Ff}}}$ respectively. Roughly speaking, this means that it detects failures of decay, and moreover which plane waves $e^{i\tau t + i \xi \cdot \bfx}$ are present in the $t\to\infty$ or $r\to\infty$ asymptotics.
	\item 
	Over the interiors of $\mathrm{nPf},\mathrm{nFf}$, $\operatorname{WF}_{\mathrm{de,sc}}^{m,\mathsf{s}}$ is the de-wavefront set $\operatorname{WF}_{\mathrm{de}}^{m,s_{\mathrm{nPf}}}$, $\operatorname{WF}_{\mathrm{de}}^{m,s_{\mathrm{nFf}}}$ respectively. Like the sc-wavefront, this detects failures of decay, but distinguishes different sorts of oscillations. In the present context, de-wavefront set at null infinity distinguishes different oscillations
	\begin{equation}
		e^{\frac{i}{\varrho}(\zeta v+\xi) + i r \eta\cdot  \theta},\quad \zeta,\xi\in \bbR, \quad v=|t|-r, \quad \varrho = \frac{1}{\sqrt{|t|+r}}
	\end{equation}
	present in radiation. Compared to plane waves, these oscillations are slower along the light cone, faster across the light cone, and comparable in the angular direction $\theta$. See \cite[Rem. 1.8]{sussman2023massive}. It is these sorts of asymptotics seen in the Green's functions of the Klein--Gordon equation (see \cite[Example 1.6]{sussman2023massive}).
\end{itemize}

\section{Proof of main theorem}
\label{sec:proof}

\subsection{Characteristic set and Hamilton flow of $P$ and $P+\lambda$}
We briefly recall the relevant phase space structures; for details see \cite[\S4]{sussman2023massive}, some figures in which complement those here.

The operator $P$ is an element of $\Psi_{\desc}^{2,0}(\bbO)$ with principal symbol 
\begin{equation} 
	p(z,\zeta)=g_z(\zeta,\zeta)+\mathsf{m}^2,
\end{equation} 
where $z=(t,\bfx)\in \bbR^{1,d}$ and $\zeta=(\tau,\xi)$ is the spacetime frequency.
The characteristic set $\Sigma$ of $P$ consists of the set of limit points on $\partial ({}^{\mathrm{de,sc}}\overline{T}^* \bbO)$ of the set where $g_z(\zeta,\zeta)=-\mathsf{m}^2$. At fiber infinity ($\mathrm{df}$) the $\mathsf{m}^2$ term is negligible, so $\Sigma\cap \mathrm{df}$ is the union of the ``points at infinity'' of the dual lightcones over every point of $\bbO$. On the other hand, at finite frequencies over the various faces constituting base infinity, the $\mathsf{m}^2$ term is of the same order as $g_z(\zeta,\zeta)$, so $\Sigma\backslash\mathrm{df}$ is the union of the mass shells over every point of $\partial\bbO$. When $\mathsf{m}>0$, the characteristic set has two connected components: $\Sigma=\Sigma^+\sqcup\Sigma^-$, where $\Sigma^{\pm}$ is the component containing the future ($+$) or past ($-$) dual light cone. In the case $\mathsf{m}=0$, the two sheets intersect at the zero section over $\partial\bbO$.

Since $p$ is real-valued, as long as $\lambda$ is non-real, \emph{the principal symbol of $P+\lambda$ never vanishes at finite (de,sc-) frequencies}. This is the source of the key ellipticity on which our proof is based.
But, at $\mathrm{df}$,  $\lambda$ is lower order. Therefore, the characteristic set of $P+\lambda$ is $\Sigma\cap \mathrm{df}$, this set being independent of $\mathsf{m}$. In particular, note that it has two connected components even for $\mathsf{m}=0$.

The operators $P$ and $P+\lambda$ give rise to the same Hamiltonian vector field $H_p$, which is computed at the beginning of \cite[\S4]{sussman2023massive}. The appropriately rescaled vector field $\mathsf{H}_p$ defined in \cite[\S1]{sussman2023massive} (i.e.\ $H_p$ minimally weighted so as to make it tangent to all of the boundaries of the compactified de,sc- phase space) is not only tangent to the phase-space boundary, it is also tangent to $\Sigma$ (since $H_pp=0$). Moreover, it is smooth up to and including the boundary, modulo conormal terms which vanish at all base infinity faces and do not affect the flow structure nearby. Its flow within $\Sigma$ has a number of vanishing (a.k.a.\ ``radial'') sets, which we describe in the $\mathsf{m}>0$ case:
\begin{itemize}
\item $\calR^{\alpha}_{\beta}$ are the sources/sinks of the flow, located over the timelike caps. Importantly, they lie at finite de,sc-frequency. That is, they do not intersect $\mathrm{df}$. It is these radial sets that are associated with the oscillations present in smooth solutions of the Klein--Gordon equation (with nonzero real mass).
\item $\calN^{\alpha}_{\beta}$ are the sources and sinks of the flow within $\Sigma\cap \mathrm{df}$ (i.e.\ ignoring the fiber radial direction and restricting to $\Sigma\cap\mathrm{df}$). This flow, over the interior of $\bbO$, is just the usual null geodesic flow at fiber infinity. So, 
\begin{equation} 
\calN = \bigcup_{\alpha,\beta = +,-} \calN^{\alpha}_{\beta} \subset \mathrm{df}
\label{eq:N}
\end{equation} 
contains the endpoints of compactified null geodesics (lifted to fiber infinity). In the full characteristic set $\Sigma$, the radial set $\calN$ is a saddle point of the flow, but within $\Sigma\cap \mathrm{df}$ it is a source/sink.
\item $\calC^{\alpha}_{\beta}$, $\calK^{\alpha}_{\beta}$, $\calA^{\alpha}_{\beta}$ are additional radial sets over the corners between null infinity and the timelike/spacelike caps. They are all located at fiber infinity and are saddle points for the flow in both $\Sigma,\Sigma\cap\mathrm{df}$. 
\end{itemize}
Here, $\alpha,\beta$ are signs --- there are four copies of each sort of radial set. The sign in the superscript refers to which of $\Sigma^\pm$ it is in (positive energy vs.\ negative energy), and the subscript to whether the radial set is over future or past timelike/null infinity.
We will use $\calR,\calN,\calC,\calK,\calA$, without decorations, to denote the union over all signs, like in \cref{eq:N}.

The structure of the flow, which we partially described in the previous paragraph, is illustrated in \Cref{fig}, \Cref{fig:grand}.

\begin{figure}[t]
		\begin{center}
			\begin{tikzpicture}[scale=.9]
			\coordinate (one) at (-.413\octheight,\octheight) {};
			\coordinate (two) at (-\octheight,.413\octheight) {}; 
			\coordinate (three) at (-\octheight,-.413\octheight) {};
			\coordinate (four) at (-.413\octheight,-\octheight) {};
			\coordinate (five) at (.413\octheight,-\octheight) {};
			\coordinate (six) at (\octheight,-.413\octheight) {};
			\coordinate (seven) at (\octheight,.413\octheight) {}; 
			\coordinate (eight) at (.413\octheight,\octheight) {};
			\coordinate (oner) at (-.413\octheightr,\octheightr) {};
			\coordinate (twor) at (-\octheightr,.413\octheightr) {}; 
			\coordinate (threer) at (-\octheightr,-.413\octheightr) {};
			\coordinate (fourr) at (-.413\octheightr,-\octheightr) {};
			\coordinate (fiver) at (.413\octheightr,-\octheightr) {};
			\coordinate (sixr) at (\octheightr,-.413\octheightr) {};
			\coordinate (sevenr) at (\octheightr,.413\octheightr) {}; 
			\coordinate (eightr) at (.413\octheightr,\octheightr) {};
			\coordinate (oneint) at (-1.25*.413\octheight,1.25\octheight) {};
			\coordinate (eightint) at (.413*1.25\octheight,1.25\octheight) {};
			\coordinate (fourint) at (-.413*1.25\octheight,-1.25\octheight) {};
			\coordinate (fiveint) at (.413*1.25\octheight,-1.25\octheight) {};
			\begin{scope}[scale=1.2] 
			\fill[fill=gray!20] (-.413\octheightr,\octheightr) -- (-\octheightr,.413\octheightr) -- (-\octheightr,-.413\octheightr) -- (-.413\octheightr,-\octheightr) -- (.413\octheightr,-\octheightr) -- (\octheightr,-.413\octheightr) -- (\octheightr,.413\octheightr) -- (.413\octheightr,\octheightr) -- cycle;
			\end{scope} 
			\draw[fill=gray!10] (oner) -- (twor) -- (threer) -- (fourr) -- (fiver) -- (sixr) -- (sevenr) -- (eightr) -- cycle;
			\draw[fill=gray!20] (one) -- (two) -- (three) -- (four) -- (five) -- (six) -- (seven) -- (eight) -- cycle;
			\draw (one) -- (oner);
			\draw (two) -- (twor);
			\draw (three) -- (threer);
			\draw (four) -- (fourr);
			\draw (five) -- (fiver);
			\draw (six) -- (sixr);
			\draw (seven) -- (sevenr);		
			\draw (eight) -- (eightr);
			\begin{scope}[decoration={
				markings,
				mark=at position 0.51 with {\arrow[scale=1.5,>=latex, color=gray]{>}}}]
			\draw[gray, postaction={decorate}] (-.706\octheight,-.706\octheight) -- (.706\octheight,.706\octheight); 
			\draw[gray, postaction={decorate}] plot[smooth,tension=.1] coordinates {
				(-.636\octheight,-.776\octheight) (.125\octheight,-.125\octheight) (.776\octheight,.636\octheight)};
			\draw[gray, postaction={decorate}] plot[smooth,tension=.1] coordinates {
				(-.776\octheight,-.636\octheight) (-.125\octheight,+.125\octheight) (.636\octheight, .776\octheight)};
			\draw[gray, postaction={decorate}]  plot[smooth,tension=.3] coordinates{ 
				(-.566\octheight,-.846\octheight) (-.456\octheight,-.756\octheight) (.275\octheight,-.275\octheight) (.756\octheight, .456\octheight)  (.846\octheight,.566\octheight) };
			\draw[gray, postaction={decorate}]  plot[smooth,tension=.3] coordinates{ 
				(-.846\octheight,-.566\octheight) (-.756\octheight,-.456\octheight) (-.275\octheight,.275\octheight) (.456\octheight,.756\octheight)  (.566\octheight,.846\octheight) };
			\draw[gray, postaction={decorate}]  plot[smooth,tension=.4] coordinates{ 
				(-.526\octheight,-.886\octheight) (-.146\octheight,-.786\octheight) (.45\octheight,-.45\octheight) (.786\octheight,.146\octheight) (.886\octheight,.526\octheight)    }; 
			\draw[gray, postaction={decorate}]  plot[smooth,tension=.4] coordinates{ 
				(-.886\octheight,-.526\octheight) (-.786\octheight,-.146\octheight) (-.45\octheight,.45\octheight) (.146\octheight,.786\octheight) (.526\octheight,.886\octheight)    }; 
			\draw[postaction={decorate}] (three) -- (two);
			\draw[postaction={decorate}] (threer) -- (three); 
			\draw[postaction={decorate}] (seven) -- (sevenr); 
			\draw[postaction={decorate}] (threer) -- (twor);
			\draw[postaction={decorate}] (fourr) -- (threer);
			\draw[postaction={decorate}] (two) -- (one);
			\draw[postaction={decorate}] (fiver) -- (fourr);
			\draw[postaction={decorate}] (six) -- (seven);
			\draw[postaction={decorate}] (sixr) -- (sevenr);
			\draw[postaction={decorate}] (sevenr) -- (eightr);
			\draw[postaction={decorate}] (eightr) -- (oner);
			\draw[gray, postaction={decorate}]  plot[smooth,tension=.5] coordinates{ (fourint) (-.513\octheightr,-.85\octheightr)  (-.75\octheightr,-.443\octheightr) 	(-.886\octheight,-.526\octheight) };
			\draw[gray, postaction={decorate}]  plot[smooth,tension=.5] coordinates{ 	(.886\octheight,.526\octheight) (.75\octheightr,.443\octheightr)  (.513\octheightr,.85\octheightr)  (eightint)};
			\draw[gray, postaction={decorate}]  plot[smooth,tension=.5] coordinates{ (fiveint) (.413\octheightr,-.65\octheightr)  (.75\octheightr,-.443\octheightr) 	(.886\octheightr,-.526\octheightr) };
			\draw[gray, postaction={decorate}]  plot[smooth,tension=.5] coordinates{ 	(-.886\octheightr,.526\octheightr) (-.75\octheightr,.443\octheightr) (-.413\octheightr,.65\octheightr)  (oneint)};
			\draw[gray, postaction={decorate}] (threer) -- (two);
			\draw[gray, postaction={decorate}] (six) -- (sevenr);
			\draw[gray, postaction={decorate}] (-.2\octheight,-1.25\octheight) -- (five);
			\draw[gray, postaction={decorate}] (.2\octheight,-1.25\octheight) -- (fourr);
			\draw[gray, postaction={decorate}] (one) -- (.2\octheight,1.25\octheight);
			\draw[gray, postaction={decorate}] (eightr) -- (-.2\octheight,1.25\octheight);
			\end{scope}
			\begin{scope}[decoration={
				markings,
				mark=at position 0.51 with {\arrow[scale=1.5,>=latex, color=gray]{>}}}] 
			\draw[postaction={decorate}] (one) -- (eight);
			\draw[postaction={decorate}] (two) -- (one); 
			\draw[postaction={decorate}] (twor) -- (two); 
			\draw[postaction={decorate}] (four) -- (five);
			\draw[postaction={decorate}] (six) -- (sixr); 
			\draw[postaction={decorate}] (five) -- (six);
			\end{scope} 
			\begin{scope}[decoration={
				markings,
				mark=at position 0.75 with {\arrow[scale=1.5,>=latex, color=gray]{>}}}]
			\draw[postaction={decorate}] (fourint) -- (four);
			\draw[postaction={decorate}] (fourint) -- (fourr);
			\draw[postaction={decorate}] (fiveint) -- (five);
			\draw[postaction={decorate}] (fiveint) -- (fiver);
			\draw[postaction={decorate}] (one) -- (oneint);
			\draw[postaction={decorate}] (eight) -- (eightint);
			\draw[postaction={decorate}] (oner) -- (oneint);
			\draw[postaction={decorate}] (eightr) -- (eightint);
			\draw[gray, postaction={decorate}]  plot[smooth,tension=.6] coordinates{ (fourint) (-.63\octheight,-1.1\octheight) (-.526\octheight,-.886\octheight) };
			\draw[gray, postaction={decorate}]  plot[smooth,tension=.6] coordinates{(-.616\octheightr,.796\octheightr) (-.73\octheight,1.19\octheight) (oneint) };
			\draw[gray, postaction={decorate}]  plot[smooth,tension=.6] coordinates{ (fiveint) (.73\octheight,-1.19\octheight) (.616\octheightr,-.796\octheightr) };
			\draw[gray, postaction={decorate}]  plot[smooth,tension=.6] coordinates{(.526\octheight,.886\octheight) (.66\octheight,1.0\octheight) (eightint) };
			\end{scope}
			\begin{scope}[decoration={
				markings,
				mark=at position .5 with {\arrow[scale=1.5,>=latex, color=gray]{>};}}]
				\draw[gray, postaction={decorate}]  plot[smooth,tension=.6] coordinates{ (.886\octheightr,-.526\octheightr)(1.1*\octheightr, -.5\octheightr) (1.15*\octheightr,0) (1.05\octheightr,.5\octheightr)   (.5\octheightr,1.05\octheightr)   (0,1.15*\octheightr) 
					(-.5\octheightr,1.1\octheightr)  (-.616\octheightr,.796\octheightr)};
				\draw[gray, postaction={decorate}]  plot[smooth,tension=.6] coordinates{(.616\octheightr,-.796\octheightr)
					(.5\octheightr,-1.1\octheightr)  (0,-1.15*\octheightr) (-.5\octheightr,-1.05\octheightr)  (-1.05\octheightr,-.5\octheightr) (-1.15*\octheightr,0) (-1.1*\octheightr, .5\octheightr) (-.886\octheightr,.526\octheightr) }; 
			\end{scope}
			\draw[thick, darkblue] (oner) -- (twor);
			\filldraw[color=darkblue] (oner) circle (2pt);
			\filldraw[color=darkblue] (twor) circle (2pt);
			\draw[thick, darkblue] (eight) -- (seven);
			\filldraw[color=darkblue] (eight) circle (2pt);
			\filldraw[color=darkblue] (seven) circle (2pt);
			\draw[thick, darkblue] (four) -- (three);
			\filldraw[color=darkblue] (four) circle (2pt);
			\filldraw[color=darkblue] (three) circle (2pt);
			\draw[thick, darkblue] (fiver) -- (sixr);
			\filldraw[color=darkblue] (fiver) circle (2pt);
			\filldraw[color=darkblue] (sixr) circle (2pt);
			\filldraw[color=darkgreen] (one) circle (2pt);
			\filldraw[color=darkgreen] (five) circle (2pt);
			\filldraw[color=mygreen] (two) circle (2pt);
			\filldraw[color=mygreen] (six) circle (2pt);
			\filldraw[color=mygreen] (threer) circle (2pt);
			\filldraw[color=mygreen] (sevenr) circle (2pt);
			\filldraw[color=darkgreen] (fourr) circle (2pt);
			\filldraw[color=darkgreen] (eightr) circle (2pt);
			\filldraw[color=darkred] (oneint) circle (2pt);
			\filldraw[color=darkred] (eightint) circle (2pt);
			\draw[thick, darkred] (oneint) -- (eightint);
			\filldraw[color=darkred] (fourint) circle (2pt);
			\filldraw[color=darkred] (fiveint) circle (2pt);
			\draw[thick, darkred] (fourint) -- (fiveint);
			\node[color=darkblue] (nm) at (.82\octheightr,-.82\octheightr) {$\calN^+_-$};
			\node[color=darkblue] (np) at (-.82\octheightr,.82\octheightr) {$\calN^+_+$};
			\draw[dashed, mygreen] (threer) -- (-1.2\octheightr,-.41\octheightr) node[left] {$\calK^+_-$};
			\draw[dashed, mygreen] (sevenr) -- (1.2\octheightr,.41\octheightr) node[right] {$\calK^+_-$};
			\draw[dashed, darkgreen] (eightr) -- (+.42\octheightr,+1.3\octheightr) node[right] {$\calC^+_+$};
			\draw[dashed, darkgreen] (fourr) -- (-.42\octheightr,-1.3\octheightr) node[left] {$\calC^+_-$};
			\node[color=darkred] (bb) at (.22\octheightr,-.93\octheightr) {$\calR^+_-$};
			\node[color=darkred] (bc) at (-.22\octheightr,.91\octheightr) {$\calR^+_+$};
			\draw[dotted] (3,.1) -- (4,.1) node[right] {$\mathrm{Sf}\cap\{x>0\}$};
			\draw[dotted] (-3,.1) -- (-4,.1) node[left] {$\mathrm{Sf}\cap\{x<0\}$};
			\draw[dotted] (-2.5,-1.5) -- (-3.5,-2.3) node[left] {$\mathrm{nPf}\cap\{x<0\}$};		
			\draw[dotted] (2,-2) -- (3.5,-2) node[right] {$\mathrm{nPf}\cap\{x>0\}$};
			\draw[dotted] (2.5,1.5) -- (3.5,2.25) node[right] {$\mathrm{nFf}\cap\{x>0\}$};		
			\draw[dotted] (-2,2) -- (-3.5,2) node[left] {$\mathrm{nFf}\cap\{x<0\}$};
			\draw[dotted] (0,-3) -- (0,-4.1) node[right] {$\mathrm{Pf}$};
			\draw[dotted] (0,3) -- (0,4.1) node[right] {$\mathrm{Ff}$};
			\node at (.7,-1.6) {$\mathrm{df}_+$};
			\draw[dashed] (.65\octheightr, -1\octheightr) -- (1\octheightr, -1\octheightr) node[right] {$\mathrm{df}_-$};
			\end{tikzpicture}
			\end{center}
	\caption{The flow of $\mathsf{H}_p$ on $\Sigma^+$, for $\mathsf{m}>0$, in the $d=1$ case when $\Sigma^+$ is the boundary of an octagonal prism --- see \cite{sussman2023massive} for a full description. Only part of $\mathrm{df}\cap \Sigma^+$ is shown, namely the right moving part, $\mathrm{df}_+$ (central octagon), and a neighborhood in the left moving part $\mathrm{df}_-$ of the portion over $\partial \bbO$ (outermost region). (Topologically, $\Sigma^+$ should be pictured in the $d=1$ case as a hollow can with polygonal sides, in which the top and bottom of the can are the two connected components of $\mathrm{df}\cap \Sigma^+ \cong \bbS^0\times \bbO$ and the side of the can consists of one sheet of a hyperbola times spacetime infinity, $\cong [-1,+1]\times \partial \bbO$. A can is difficult to draw, so we puncture it in the base, splay out the sides, and then lay it flat. This is how the figure above is arrived at.) The time axis is vertical and the spatial axis $\bbR_x$ is horizontal. The labels $\bullet\mathrm{f}$ specify which boundary hypersurface of the compactified phase space the labeled portion of $\Sigma^+$ is a subset of. For $d\geq 2$, the same picture depicts the flow in the portion of the characteristic set corresponding to zero angular momentum if we ignore the angular degrees of freedom. The radial sets $\calA$ are located at high angular momentum and are therefore not depicted. See \Cref{fig:grand} for a figure with $\calA$. See \cite[\S4]{sussman2023massive} for the computation of the flow. \emph{One key observation on which this paper is based is that the radial sets {\color{darkred}$\calR$} do not intersect $\mathrm{df}$} (the inner and outer regions in the diagram) in the de,sc-phase space. Contrast with the situation in the sc-cotangent bundle \cite[Fig.\ 1]{sussman2023massive}.}
	\label{fig}
\end{figure}

Our sign convention is that, in the $\mathsf{m}>0$ case, the flow is directed from $\calR^+_-$ to $\calR^+_+$ (i.e. past to future) in $\Sigma^+$ but from $\calR^-_+$ to $\calR^-_-$ (future to past) in $\Sigma^-$.  Particles propagate forwards in time and holes propagate backwards.

\subsection{Regularity theory and proof of the main result}

We are interested in the regularity of solutions $u$ to $Pu+\lambda u=f$ given information about the regularity of $f$. At points away from the characteristic set, we can appeal to microlocal elliptic regularity; since the de,sc-characteristic set of $P+\lambda$ is $\Sigma\cap\mathrm{df}$, we can immediately conclude that
\begin{equation}
\label{eq:elliptic-reg}
\WF_{\desc}^{m,\mathsf{s}}(u)\backslash \WF_{\desc}^{m-2,\mathsf{s}}(Pu+\lambda u)\subseteq \Sigma\cap\mathrm{df}.
\end{equation}
In particular, if $Pu+\lambda u\in \calS$, in which case $\WF_{\desc}^{m-2,\mathsf{s}}(Pu+\lambda u)=\varnothing$, we get $\WF_{\desc}^{m,\mathsf{s}}(u)\subseteq \Sigma\cap\mathrm{df}$. 

Within the characteristic set, we need to use propagation estimates. The operator $P+\lambda$ is not of real principal type, but, since the imaginary part of the principal symbol is just the imaginary part $\Im \lambda$ of $\lambda$, hence has a definite sign, we can still propagate \textit{along/against the Hamiltonian flow of $P$ within $\Sigma = \Sigma(\Re \lambda)$}, the characteristic set of $P+\Re \lambda$;\footnote{When we write $\Sigma$ below, we mean the characteristic set of $P+ \Re \lambda$. If $\Re \lambda+\mathsf{m}^2>0$, $\Sigma^\pm$ refers to the components of this disconnected set, as before. If $\Re \lambda+\mathsf{m}^2\leq 0$, then $\Sigma$ has only one component, if $d\geq 2$. However, we can always restrict attention to an arbitrarily small neighborhood of $\mathrm{df}$, where there are two components as before, and these we can refer to as $\Sigma^\pm$. } the imaginary part means simply that we can only propagate regularity in one direction relative to the Hamiltonian flow: forward for $\Im \lambda>0$ and backward for $\Im \lambda<0$, according to our sign conventions. We will say more about this below. 

Since, due to \cref{eq:elliptic-reg}, $u$ automatically has the desired regularity everywhere except at $\mathrm{df}$, we only need to consider the flow in an arbitrarily small neighborhood of $\mathrm{df}$.
The flow enters and exits fiber infinity at $\calA,\calC,\calK,\calN$. Each of these radial sets is associated with an estimate which says when we can propagate control/singularities through --- these are listed in \cite[\S5]{sussman2023massive}, and we will cite them below.

All radial point estimates use a similar method of proof, regardless of whether the radial sets are pure sources/sinks or saddle points. We will recall this method below. However, the conclusion changes dramatically depending on the sort of radial set. For pure sources/sinks, such as the radial set $\calR$ in \cite[\S6]{sussman2023massive} or the Parenti--Shubin radial sets in \cite{vasy2020essential}, the conclusion (assuming the forcing is nice, say Schwartz) is that one can propagate ``below threshold'' -- meaning weak -- control into a radial set and parlay ``above threshold'' control within the radial set to arbitrarily strong control.
In contrast, radial point estimates at saddle points are propagation-like. They say that we can propagate control along the stable manifold through the radial set to get control along the unstable manifold, or vice versa. The hypotheses of the estimate still include an inequality relating the various Sobolev orders appearing in the estimate; it says that given a finite amount of incoming Sobolev regularity, we only get a corresponding amount of outgoing Sobolev regularity. For instance, the estimate at $\calN$ says that for each extra order of incoming smoothness, we get one extra order of outgoing decay at null infinity, or vice versa. The threshold inequalities at the other saddle points $\calA,\calC,\calK$ are more complicated, because they also relate decay at null infinity to decay at spacelike/timelike infinity, but the basic idea is the same. As long as we have enough incoming control, we can conclude a correspondingly large amount of outgoing control.

For the purpose of concatenating propagation statements, we are free to ignore bicharacteristics (integral curves of $\mathsf{H}_p$) which connect radial sets through the fiber interiors, again because we already know \cref{eq:elliptic-reg}. \emph{Within} fiber infinity, the radial sets $\calN$ are global sources and sinks of the flow (as we have already stated; cf.\ \cite{HintzVasyScriEB}), so they become the initial and final points of propagation. Because they are saddle points from the perspective of the full flow, this propagation does \textit{not} require any Sobolev orders to be above or below an absolute threshold. In other words, instead of using an above-threshold (high-regularity) source/sink estimate to start off the propagation sequence, we can directly propagate regularity into $\calN$ \textit{from the fiber interiors}, where we have elliptic regularity (\cref{eq:elliptic-reg}). Similarly, instead of using a below-threshold (low-regularity) source/sink estimate to complete the sequence, we use another saddle point estimate.

For regularity to propagate through \emph{all} of the saddle points in a component of $\Sigma\cap\mathrm{df}$, the Sobolev orders have to satisfy a system of inequalities. 
Specifically:

\begin{proposition}
\label{thm:propagation}
	Let $\pm \Im (\lambda)>0$ and $u\in\mathcal{S}'$. 
	\begin{enumerate}
	\item If $\WF_{\desc}^{m-1,\mathsf{s}+1}(Pu+\lambda u)\cap\Sigma^{\pm}\cap \mathrm{df}=\varnothing$ for a choice of orders satisfying both the inequalities
	\begin{equation} 
	\begin{cases}
	m>\max\{
1+s_{\mathrm{nFf}},\
1-s_{\mathrm{nFf}}+2s_{\mathrm{Ff}},\ 
1/2+s_{\mathrm{nFf}}-s_{\mathrm{Sf}},\ 
1-s_{\mathrm{nPf}}+2s_{\mathrm{Sf}}
\},\\
	m<\,\min\{
1+s_{\mathrm{nPf}},\ 
1-s_{\mathrm{nPf}}+2s_{\mathrm{Pf}},\ 
1/2+s_{\mathrm{nPf}}-s_{\mathrm{Sf}},\ 
1-s_{\mathrm{nFf}}+2s_{\mathrm{Sf}}
\},
	\end{cases}
	\label{eq:ineq}
	\end{equation} 
	then $\WF_{\desc}^{m,\mathsf{s}}(u)\cap\Sigma^{\pm}\cap\mathrm{df}=\varnothing$.
	
	\item If $\WF_{\desc}^{m-1,\mathsf{s}+1}(Pu+\lambda u)\cap\Sigma^{\mp}\cap \mathrm{df}=\varnothing$ for a choice of orders satisfying both the inequalities
	\begin{equation} 
	\begin{cases}
	m<\,\min\{
1+s_{\mathrm{nFf}},\
1-s_{\mathrm{nFf}}+2s_{\mathrm{Ff}},\ 
1/2+s_{\mathrm{nFf}}-s_{\mathrm{Sf}},\ 
1-s_{\mathrm{nPf}}+2s_{\mathrm{Sf}}
\},\\
	m>\max\{
1+s_{\mathrm{nPf}},\ 
1-s_{\mathrm{nPf}}+2s_{\mathrm{Pf}},\ 
1/2+s_{\mathrm{nPf}}-s_{\mathrm{Sf}},\ 
1-s_{\mathrm{nFf}}+2s_{\mathrm{Sf}}
\},
	\end{cases}
	\end{equation} 
	then $\WF_{\desc}^{m,\mathsf{s}}(u)\cap\Sigma^{\mp}\cap\mathrm{df}=\varnothing$.
	\end{enumerate}
\label{prop:main}
\end{proposition}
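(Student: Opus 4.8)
The plan is to prove \Cref{thm:propagation} by concatenating the microlocal estimates of \cite[\S5]{sussman2023massive} along the Hamilton flow of $P$ inside $\Sigma\cap\mathrm{df}$, using \cref{eq:elliptic-reg} and the ellipticity of $P+\lambda$ at finite de,sc-frequency — which holds because $\Im\lambda\neq0$ — to supply the control in the fiber interiors that seeds the propagation. Part (2) is obtained from part (1) by the time reflection $t\mapsto-t$, under which the phase-space structure, and hence the estimates, are carried to themselves with $\mathrm{Pf}\leftrightarrow\mathrm{Ff}$, $\mathrm{nPf}\leftrightarrow\mathrm{nFf}$ and $\Sigma^{\pm}\leftrightarrow\Sigma^{\mp}$, while \cref{eq:ineq} is interchanged with the inequality system in (2); so I would prove only (1), and there only the case $\Im(\lambda)>0$ (the case $\Im\lambda<0$ being identical after swapping signs throughout). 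With $\Im\lambda>0$, regularity propagates \emph{forward} along $\mathsf{H}_p$.

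So fix orders $(m,\mathsf{s})$ satisfying \cref{eq:ineq} and suppose $\WF_{\desc}^{m-1,\mathsf{s}+1}(Pu+\lambda u)\cap\Sigma^{+}\cap\mathrm{df}=\varnothing$. Since $u\in\calS'$ it lies in $H^{m_0,\mathsf{s}_0}_{\desc}$ for some orders. By \cref{eq:elliptic-reg}, whose only input here is ellipticity of $P+\lambda$ off $\Sigma\cap\mathrm{df}$, the obstruction $\WF_{\desc}^{m,\mathsf{s}}(u)$ is, near $\Sigma^{+}\cap\mathrm{df}$, contained in $\Sigma^{+}\cap\mathrm{df}$ together with $\WF_{\desc}^{m-2,\mathsf{s}}(Pu+\lambda u)$ — so $u$ already has the target regularity in the fiber interiors wherever $Pu+\lambda u$ does, in particular everywhere in the case $Pu+\lambda u\in\calS$ relevant to \Cref{thm:main}. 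It therefore suffices to run a propagation argument in an arbitrarily small neighborhood of $\Sigma^{+}\cap\mathrm{df}$, where the $\mathsf{m}=0$ degeneracy at the zero section is irrelevant.

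For the propagation I would use the flow picture recalled above. Inside $\mathrm{df}$, $\mathsf{H}_p|_{\Sigma^{+}}$ is the compactified null-geodesic flow, whose only source is $\calN^{+}_{-}$ (over the interior of past null infinity), whose only sink is $\calN^{+}_{+}$ (over the interior of future null infinity), and which otherwise meets the saddle radial sets $\calC^{+}_{\pm},\calK^{+}_{\pm}$ over the corners of null infinity with the timelike and spacelike caps, together with $\calA^{+}_{\pm}$ at high angular momentum when $d\geq2$. The crucial feature — the reason the de,sc-calculus works here where the sc-calculus does not — is that $\calN^{+}_{\pm}$, unlike the finite-frequency radial sets $\calR^{\pm}_{\pm}$ of \cite{sussman2023massive}, lie at fiber infinity and are \emph{saddle} points of the \emph{full} flow on $\Sigma^{+}$, with stable and unstable directions reaching into the fiber interior. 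Consequently the estimate at $\calN^{+}_{-}$ is of the saddle type of \cite[\S5.3]{sussman2023massive} and carries \emph{no absolute} (i.e.\ $\mathsf{s}$-independent) threshold, so it lets me propagate the fiber-interior control directly into $\calN^{+}_{-}$; the estimate at $\calN^{+}_{+}$ is applied the same way once the bulk has been treated. At each corner radial set $\calC^{+}_{\pm},\calK^{+}_{\pm},\calA^{+}_{\pm}$ I would invoke the corresponding saddle-point estimate of \cite[\S5]{sussman2023massive} to push control from the incoming to the outgoing side, subject only to an $\mathsf{s}$-dependent (again non-absolute) threshold. Interleaving these corner estimates with ordinary real-principal-type propagation along the bicharacteristics of $P+\Re\lambda$ inside $\mathrm{df}$ — which relates $\WF_{\desc}^{m,\mathsf{s}}(u)$ to $\WF_{\desc}^{m-1,\mathsf{s}+1}(Pu+\lambda u)$ and to the a priori regularity of $u$ with no loss in the latter — transports the control obtained at the incoming radial sets forward through the remainder of $\Sigma^{+}\cap\mathrm{df}$; as the flow has no further recurrence, this yields $\WF_{\desc}^{m,\mathsf{s}}(u)\cap\Sigma^{+}\cap\mathrm{df}=\varnothing$.

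The step I expect to be genuinely delicate is not any individual estimate but the bookkeeping that ties them together: one must verify that the thresholds produced by the individual radial-set estimates of \cite[\S5]{sussman2023massive}, with their precise dependence on the weights $s_{\mathrm{Pf}},s_{\mathrm{nPf}},s_{\mathrm{Sf}},s_{\mathrm{nFf}},s_{\mathrm{Ff}}$ and on the half-integral shifts coming from the change of smooth structure at null infinity, combine into exactly the two four-fold $\max/\min$ conditions of \cref{eq:ineq}, and that the intermediate regularity needed to begin each estimate is always furnished by the previous link in the chain, so that no absolute threshold is ever met. Granting this, the fact that \cref{eq:ineq} admits solutions with $m$ and all five components of $\mathsf{s}$ arbitrarily large is exactly what promotes \Cref{thm:propagation} to the Schwartz-to-Schwartz statement \Cref{thm:main}.
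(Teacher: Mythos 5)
Your route is the paper's route: use the elliptic estimate \cref{eq:elliptic-reg} to get control of $u$ off $\Sigma\cap\mathrm{df}$, then concatenate the radial-point and propagation estimates of \cite[\S 5]{sussman2023massive} along $\mathsf{H}_p$ from $\calN^{\pm}_{-}$ through the corner saddle sets $\calC,\calK,\calA$ to $\calN^{\pm}_{+}$, exploiting that all radial sets met at $\mathrm{df}$ are saddles of the full flow so that no absolute thresholds arise, with \cref{eq:ineq} appearing as the conjunction of the cited thresholds. Your reduction of part (2) to part (1) by $t\mapsto -t$ (which preserves the hypotheses on $g$ and $A$ and swaps $\mathrm{Pf}\leftrightarrow\mathrm{Ff}$, $\mathrm{nPf}\leftrightarrow\mathrm{nFf}$, $\Sigma^{\pm}\leftrightarrow\Sigma^{\mp}$) is a harmless variant of the paper's ``the other cases are analogous,'' and deferring the threshold bookkeeping is no worse than what the paper does, since the paper resolves it simply by listing the propagation order and citing Props.\ 5.6, 5.7, 5.10--5.14, whose hypotheses conjoin to \cref{eq:ineq}.

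The genuine gap is the step you pass over when you describe the $\mathrm{df}$-interior steps as ``ordinary real-principal-type propagation for $P+\Re\lambda$'' and invoke the \S 5 estimates as if they applied verbatim: those estimates are proved for a real spectral parameter, and they do not directly apply to $P+\lambda$ with $\Im\lambda\neq 0$. If one instead tries to treat $i\,\Im(\lambda)\,u$ as forcing, one needs $\WF_{\desc}^{m-1,\mathsf{s}+1}(u)=\varnothing$ on the propagation region, which is \emph{stronger in decay} than the conclusion being proved and hence unavailable. The actual mechanism --- and the bulk of the paper's written proof --- is the complex-absorption sign check: in each positive-commutator identity the $\lambda$ term contributes $-2\Im(\lambda)\lVert\check{A}u\rVert^{2}$, a term that cannot be absorbed by a priori regularity (it lives at order $(m-1/2,\mathsf{s}+1/2)$, half an order better in decay than the target) and so must be \emph{discarded using its sign}; that sign is favorable exactly when propagating forward along $\mathsf{H}_p$ for $\Im\lambda>0$ and backward for $\Im\lambda<0$, and the analogous check must be made in the radial-point estimates (where it interacts with whether one propagates from the stable or unstable side). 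One must also note, as the paper does, that replacing $\mathsf{m}^{2}$ by $\mathsf{m}^{2}+\Re\lambda$ changes neither the structure of $\Sigma$ and the flow near $\mathrm{df}$ nor any of the thresholds. You assert the resulting directionality as a fact but give no argument for it; without this verification the citation of \cite[\S 5]{sussman2023massive} does not cover the operator $P+\lambda$, so this sign analysis needs to be added for the proposal to be a proof.
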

\begin{remark}
	There should exist exponentially growing solutions $u$ to $Pu+\lambda u = 0$, to which the conclusion of the previous proposition obviously does not apply. The hypothesis that $u$ be tempered is what is excluding such counter-examples to \Cref{prop:main}.  
\end{remark}
\begin{proof}
By \cref{eq:elliptic-reg} (and the fact that wavefronts are closed), there exists a punctured neighborhood of $\Sigma^{\pm}\cap\mathrm{df}$ (in case one) or $\Sigma^{\mp}\cap\mathrm{df}$ (in case two) disjoint from $\WF_{\desc}^{m,\mathsf{s}}(u)$. This only requires that 
\begin{equation} 
	\WF_{\desc}^{m-2,\mathsf{s}}(Pu+\lambda u)\cap\Sigma^{\bullet}\cap \mathrm{df}=\varnothing,
\end{equation} 
whereas our assumption in the proposition statement is stronger by one order.\footnote{$(m-1,\mathsf{s}+1)$ vs.\ $(m-2,\mathsf{s})$. This is the usual ``loss of one order'' when comparing propagation estimates to elliptic estimates.}

Beyond this starting point, the proposition is a concatenation of \cite[Props.\ 5.6, 5.7, 5.10-5.14]{sussman2023massive}, except applied to $P+\lambda$. The propositions were not stated in \cite{sussman2023massive} in this level of generality, but the proofs apply. Indeed, in this former work it was assumed what amounts to $\mathsf{m}^2 + \Re \lambda>0$, but the structure of $\Sigma$ and $\mathsf{H}_p$ is, sufficiently close to fiber infinity, unchanged if we replace $\mathsf{m}^2 + \Re \lambda$ with another real constant.\footnote{In addition, the constant term does not enter into the calculation of any of the thresholds in any of the estimates.} Moreover, our previous work assumed $\Im \lambda=0$. The only modification in the $\Im \lambda\neq 0$ case is the already mentioned fact that we can only propagate estimates/singularities in one direction along the flow. When propagating in that one direction, the contributions to the estimates coming from $\Im \lambda$ have the sign which allows those terms to be thrown out. This is by now a standard phenomenon going under the name of \emph{complex absorption} --- see \cite{VasyGrenoble} for the version of this argument taking place in the Parenti--Shubin calculus.

For the reader not familiar with these sorts of positive commutator arguments, we provide a sketch. 
\begin{itemize}
	\item Let us now recall how propagation estimates work in this context.  Consider the half of the propagation of singularities theorem which propagates regularity \textit{forward} along the rescaled Hamilton flow $\mathsf{H}_p$. This is the statement that if $u$ has no de,sc-wavefront set of order $(m,\mathsf{s})$ at a point in $\Sigma$, then it also has no wavefront set of order $(m,\mathsf{s})$ at any point downstream, as long as the bicharacteristic segment connecting them does not pass through $\WF_{\desc}^{m-1,\mathsf{s}+1}(Pu+\lambda u)$. The proof is a positive-commutator argument based on the construction of a symbol $a$ such that
	\begin{equation}
	H_pa=-\delta \rho^{2m-2,2\mathsf{s}+2}a^2-b^2+e,\quad  a\in S^{2m-1,2\mathsf{s}+1}_{\mathrm{de,sc}}
	\label{eq:PoS-commutant-symbol}
	\end{equation}
	where $\delta>0$ is a constant, $b\in S^{m,\mathsf{s}}_{\mathrm{de,sc}}$ is elliptic at the downstream point, and $e\in S^{2m,2s}_{\mathrm{de,sc}}$ is supported in a neighborhood of the upstream point. One then quantizes these symbols (in a way preserving essential supports) to get pseudodifferential operators $A,B,E$ satisfying
	\begin{equation}
	i[A,P]=-\delta(\Lambda A)^*(\Lambda A)-B^*B+E+F,
	\label{eq:PoS-commutant-op}
	\end{equation}
	where $\Lambda$ is a quantization of $\rho^{m-1,\mathsf{s}+1}$, $F$ is an error term which is an order lower than $E$. Here, one uses that the principal symbol of $i[A,P]$ is $H_pa$.\footnote{The sign in the identity $\sigma_\bullet(i[A,P])=\pm H_pa$ depends on sign conventions for Hamiltonian vector fields and the quantization map. We follow the conventions of \cite{sussman2023massive}, which differ from those in \cite{vasy2020essential}, leading to the direction of propagation (relative to $H_p$) for a given sign of $\Im\lambda$ being flipped compared to \cite{vasy2020essential}.}
	It can be ensured that $A=\check{A}^2$ for some symmetric $\check{A}$.

	For $u$ in a regular enough weighted Sobolev space, \cref{eq:PoS-commutant-op}, combined with the identity
	\begin{equation}
	-2 \Im \langle Pu, A u \rangle = i\langle Pu,Au \rangle - i\langle Au, Pu \rangle  = 
	\langle i[A,P]u,u \rangle 
	\label{eq:commutator-identity}
	\end{equation}
	(assuming that the integration-by-parts here can be justified), 
	yields
	\begin{equation}
	\lVert Bu\rVert^2 = 2\Im \langle Pu,Au\rangle-\delta\lVert\Lambda Au\rVert^2+\langle Eu,u\rangle+\langle Fu,u\rangle.
	\label{eq:PoS-estimate}
	\end{equation}
	Since we want to assume that $(P+\lambda)u$ is under control, not $Pu$, we rewrite this as 
	\begin{equation}
	\lVert Bu\rVert^2=2 \Im \langle (P+\lambda)u,Au\rangle-\delta\lVert \Lambda Au\rVert^2+\langle Eu,u\rangle+\langle Fu,u\rangle-2\Im(\lambda)\lVert \check{A}u\rVert^2.
	\end{equation}
	Since we are interested in bounding $\lVert Bu \rVert$ above, as long as $\Im \lambda>0$ the last term can be ignored, and we have 
	\begin{equation}
	\lVert Bu\rVert^2\leq 2 \Im \langle (P+\lambda)u,Au\rangle-\delta\lVert \Lambda Au\rVert^2+\langle Eu,u\rangle+\langle Fu,u\rangle.
	\label{eq:misc_p00}
	\end{equation}
	All the terms on the right-hand side are under control (where controlling $\langle Fu,u \rangle$ requires assuming \begin{equation} 
		\WF_{\desc}^{m-1/2,\mathsf{s}-1/2}(u)=\varnothing
	\end{equation} 
	in the propagation region, so in reality an inductive argument is required, gaining one half-order at a time), which leads to control of $\lVert Bu \rVert$.\footnote{Actually, Cauchy--Schwarz is used to estimate $\langle (P+\lambda )u, Au \rangle$ in terms of $C(\varepsilon) \lVert \Lambda^{-1} (P+\lambda) u \rVert^2+ \varepsilon \lVert \Lambda A u \rVert^2$. As long as $\varepsilon>0$ is small enough, we can absorb the  $\varepsilon \lVert \Lambda A u \rVert$ term into the $\delta$-term in \cref{eq:misc_p00} and then throw the combination away (as the combination has the ``right'' sign). This is the purpose of the $\delta$-term.} By the choice of $b$, this implies an estimate of the $H^{m,\mathsf{s}}_{\mathrm{de,sc}}$ norm of $u$ microlocalized near the point at which we want to establish control. 
	So, in summary, as long as $\Im \lambda$ has the ``right'' sign, then the same symbol construction used to prove propagation estimates in the $\lambda\in \bbR$ case also works here.
	
	If instead $\Im \lambda<0$, then the discussion above applies, mutatis mutandis, to propagating control in the opposite direction along the flow.

	\item  Now consider the radial point estimates which propagate control into a saddle point of the flow from the stable directions, i.e. forward along the Hamiltonian flow.

	The proofs are based on constructing a symbol $a$ which instead of \cref{eq:PoS-commutant-symbol} satisfies
	\begin{equation}
	H_pa=-\delta\rho^{2m-2,2 \mathsf{s}+2} a^2-b^2+e_{\mathrm{s}}^2-e_{\mathrm{u}}^2+e_{\mathrm{e}},
	\label{eq:RP-commutant-symbol}
	\end{equation}
	where $b$ is elliptic at the radial set, $e_{\mathrm{s}}$ is supported near the stable manifold in $\Sigma$ of the radial set but away from the unstable manifold (including the radial set itself), $e_{\mathrm{u}}$ is supported near the unstable manifold in $\Sigma$ but away from the stable manifold, and $e_{\mathrm{e}}$ is supported away from $\Sigma$. Quantizing both sides of \eqref{eq:RP-commutant-symbol}, applying them to $u$ and pairing with $u$ then yields
	\begin{equation}
	\lVert B u \rVert^2=2\Im\langle Pu,Au\rangle-\delta \lVert\Lambda Au\rVert^2+\lVert E_{\mathrm{s}}u \rVert^2-\|E_{\mathrm{u}}u\|^2+\langle E_{\mathrm{e}}u,u\rangle+\langle Fu,u\rangle.
	\label{eq:RP-estimate}
	\end{equation}
	Since the goal is to estimate $\lVert Bu \rVert$ from above, the $\lVert E_{\mathrm{u}}u \rVert$ term ``has the right sign'' and can be dropped, and every other term on the right-hand side is under control if we assume control of $u$ at the stable manifold. By the same argument as above, the resulting estimate remains valid for $P+\lambda$ when $\Im \lambda>0$. 
	
	To propagate regularity from the unstable manifold, one instead arranges 
	\begin{equation} 
	H_pa=\delta\rho^{2m-2,2\mathsf{s}+2}a^2+b^2+e_{\mathrm{s}}^2-e_{\mathrm{u}}^2+e_{\mathrm{e}};
	\end{equation} 
	the corresponding sign changes mean that the estimate remains valid for $P+\lambda$ when $\Im \lambda<0$.
\end{itemize}
So, in summary, if $\lambda\in \bbC \backslash \bbR$, then the propagation/radial point estimates in \cite[\S5]{sussman2023massive} apply to $P+\lambda$, as long as we are propagating along the flow if $\Im \lambda>0$ and against the flow if $\Im \lambda<0$.

We now return to the manner in which these estimates are concatenated to yield \Cref{prop:main}. For definiteness, we assume $\Im \lambda>0$ and consider propagation in $\Sigma^+$, i.e. case one; thus, we are propagating control along the arrows in \Cref{fig:grand}. The other cases are analogous. Control on $u$ is propagated in the following order:
\begin{itemize}
\item From $(\Sigma^+\cap\mathrm{nPf})\backslash\mathrm{df}$ (in a neighborhood of $\mathrm{df}$) into $\calN^+_-$, as in \cite[Prop.\ 5.10]{sussman2023massive};
\item From $\calN^+_-$ throughout $(\Sigma^+\cap\mathrm{df}\cap\mathrm{Pf})\backslash \calC^+_-$, $\Sigma^+\cap\mathrm{df}\cap\pi^{-1}\bbO^{\circ}$ (i.e.\ fiber infinity over the interior), and throughout $(\Sigma^+\cap\mathrm{df}\cap\mathrm{nPf})\backslash \mathrm{Sf}$ \textit{except} $\calC^+_-$ and the bicharacteristics connecting it to $\calK^+_-$;
\item From $(\Sigma^+\cap \mathrm{df}\cap \mathrm{Pf})\backslash\calC^+_-$ into $\calC^+_-$, using \cite[Prop.\ 5.13]{sussman2023massive}, thereby concluding regularity in all of $\Sigma^+\cap \mathrm{df}\cap \mathrm{Pf}$;
\item From $\calC^+_-$, along the bicharacteristics connecting it to $\calK^+_-$, thereby concluding regularity in all of $(\Sigma^+\cap\mathrm{df}\cap\mathrm{nPf})\backslash\mathrm{Sf}$;
\item From those bicharacteristics into $\calK^+_-$, using \cite[Prop.\ 5.11]{sussman2023massive};
\item From $\calK^+_-$ and $\calN^+_-\cap\mathrm{Sf}$ throughout $(\Sigma^+\cap\mathrm{df}\cap\mathrm{nPf}\cap\mathrm{Sf})\backslash\calA^+_-$;
\item From $(\Sigma^+\cap\mathrm{nPf})\backslash\calA^+_-$ into $\calA^+_-$, using \cite[Prop.\ 5.7]{sussman2023massive}, thereby concluding regularity in all of $\Sigma^+\cap\mathrm{df}\cap\mathrm{nPf}$;
\item From $\Sigma^+\cap\mathrm{df}\cap\mathrm{nPf}$, throughout $(\Sigma^+\cap\mathrm{df}\cap\mathrm{Sf})\backslash\mathrm{nFf}$;
\item From $(\Sigma^+\cap\mathrm{df}\cap\mathrm{Sf})\backslash\mathrm{nFf}$ into $\calA^+_+$, using \cite[Prop.\ 5.6]{sussman2023massive};
\item From $\calA^+_+$, throughout $(\Sigma^+\cap\mathrm{df}\cap\mathrm{nFf}\cap\mathrm{Sf})\backslash(\calK^+_+\cup\calN^+_+)$;
\item From a neighborhood of $(\Sigma^+\cap\mathrm{Sf})\backslash\calK^+_+$ into $\calK^+_+$, using \cite[Prop.\ 5.12]{sussman2023massive}, thereby concluding regularity in all of $(\Sigma^+\cap\mathrm{df}\cap\mathrm{Sf})\backslash\calN^+_+$;
\item From $(\Sigma^+\cap\mathrm{df}\cap\mathrm{Sf}\cap\mathrm{nFf})\backslash\calN^+_+$, throughout $(\Sigma^+\cap\mathrm{df}\cap\mathrm{nFf})\backslash (\mathrm{Ff}\cup\calN^+_+)$;
\item From the bicharacteristics in $\Sigma^+\cap\mathrm{df}\cap\mathrm{nFf}$ connecting $\calK^+_+$ to $\calC^+_+$ into $\calC^+_+$, using \cite[Prop.\ 5.14]{sussman2023massive};
\item From $\calC^+_+$, throughout $(\Sigma^+\cap\mathrm{df}\cap\mathrm{Ff})\backslash\calN^+_+$, thereby concluding regularity in all of $(\Sigma^+\cap\mathrm{df}\cap(\mathrm{nFf}\cup\mathrm{Ff})) \backslash\calN^+_+$;
\item From $(\Sigma^+\cap\mathrm{df})\backslash\calN^+_+$ into $\calN^+_+$, using \cite[Prop.\ 5.10]{sussman2023massive}, finally concluding regularity in all of $\Sigma^+\cap\mathrm{df}$.
\end{itemize}
(This is the same propagation order used in \cite{HintzVasyScriEB}. Beware that this reference uses different notation.)

The inequality \cref{eq:ineq} is just the conjunction of all of the inequalities in the hypotheses of the cited propositions in \cite[\S5]{sussman2023massive}.
\end{proof}

\begin{figure}
	\includegraphics[scale=1]{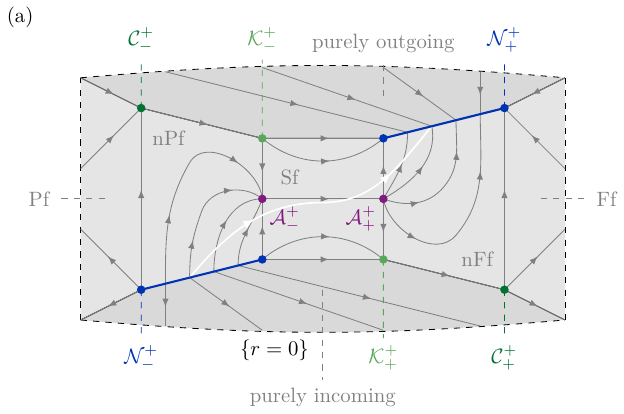}
	\includegraphics{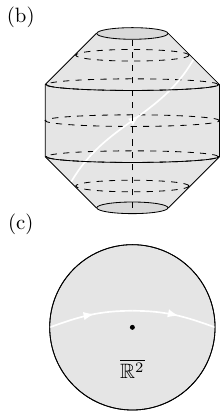}
	\caption[The flow $\mathsf{H}_p$]{(a) The flow, for $g$ the Minkowski metric, in the subset of $\Sigma^+\cap \mathrm{df}$ with nonnegative angular momentum, in the $d=2$ case. \textit{We are not depicting the spatial angular variable $\theta\in \mathbb{S}^1$} and are therefore forgetting the $\partial_\theta$ part of $\mathsf{H}_p$ (which vanishes at null infinity anyways). The top and bottom panels of the figure are the portion of $\Sigma^+\cap \mathrm{df}$ on which the de,sc-angular momentum vanishes. (The de,sc-angular momentum is a rescaling of angular momentum by a function of the spacetime coordinates and is therefore not conserved.) Such momenta are pointing radially inwards (incoming) or radially outwards (outgoing). The horizontal hyperplane in the vertical middle of the figure, in which $\calA$ lies, is the subset of $\Sigma^+\cap \mathrm{df}$ on which angular momentum is maximized. The vertical panels of the figure are over the faces $\mathrm{Pf},\dots,\mathrm{Ff}$ of $\bbO$. The interstitial regions in the figure are the points in  $\Sigma^+\cap \mathrm{df}$ over $\bbO^\circ$ on which angular momentum is nonvanishing. A typical bicharacteristic in the region (white) snakes from $\calN^+_-$ to $\calN^+_+$ without hitting the set $\{r=0\}$ (dashed black). When $g$ is only \emph{asymptotically} flat, the flow over $\bbO^\circ$ is modified, but the flow over $\partial \bbO$ is not. Also depicted are the projection of the white bicharacteristic (b) onto spacetime, i.e.\ ignoring frequency coordinates -- using the same conventions as \Cref{fig:o}(b) -- and (c) onto the radial compactification $\smash{\overline{\mathbb{R}^2}}$ of space, i.e.\ ignoring the time coordinate as well. Since the angular momentum is nonzero, the curve misses the origin $\{r=0\}$ (black); this is also why the curve in (b) does not enter/leave null infinity at $45^\circ$ in the plane of the page. The purely outgoing/incoming bicharacteristics are those that hit the spatial origin.}
	\label{fig:grand}
\end{figure}

\begin{remark}
The fact that we can use elliptic estimates to establish control in the fiber interiors allows us to sidestep a significant technical difficulty in \cite{sussman2023massive} related to the extended radial set $\calN$. 	
In this previous work, there are bicharacteristics in $\Sigma^+$ going from $\calN^+_+$ to $\calK^+_+$ \emph{through the fiber interiors}, then to $\calC^+_+$ and then back to (a different point of) $\calN^+_+$. Consequently, it was required to propagate control through the various radial sets in a different order, using radial point estimates microlocalized to proper subsets of $\calN$.  
\end{remark}

Now we show that Proposition \ref{thm:propagation} implies Theorem \ref{thm:main}. This amounts to showing that the systems of inequalities in that proposition have solutions with all orders arbitrarily high.

\begin{proof}[Proof of Theorem \ref{thm:main}]
By elliptic regularity, $\WF_{\desc}(u)\subset \Sigma\cap\mathrm{df}$.

For any $N>1$, let us take 
\begin{equation} 
s_{\mathrm{Ff}}=s_{\mathrm{nFf}}=N,
\hspace{30pt}
s_{\mathrm{Sf}}=2N,
\hspace{30pt}
s_{\mathrm{Pf}}=s_{\mathrm{nPf}}=4N,
\hspace{30pt}
m=2N.
\end{equation}
Then the inequalities in the first part of Proposition \ref{thm:propagation} are satisfied, so from $f\in\mathcal{S}$ we conclude that $\WF_{\desc}^{m,\mathsf{s}}(u)\cap\Sigma^{\pm}\cap\mathrm{df}=\varnothing$. Taking $N$ arbitrarily large, we conclude that $\WF_{\desc}(u)\cap\Sigma^{\pm}\cap\mathrm{df}=\varnothing$.

Taking instead
\begin{equation} 
s_{\mathrm{Ff}}=s_{\mathrm{nFf}}=4N,
\hspace{30pt}
s_{\mathrm{Sf}}=2N,
\hspace{30pt}
s_{\mathrm{Pf}}=s_{\mathrm{nPf}}=N,
\hspace{30pt}
m=2N,
\end{equation}
by a completely analogous argument we find $\WF_{\desc}\cap\Sigma^{\mp}\cap\mathrm{df}=\varnothing$. Therefore, $u$ has no de,sc-wavefront set anywhere, hence is Schwartz.
\end{proof}

Proposition \ref{thm:propagation} allows us to make a sharper statement about global Sobolev regularity of solutions to $Pu+\lambda u=f$ assuming that $f$ is in a Sobolev space within the range allowed by the inequalities. However, because we are forced to propagate regularity in the same direction with respect to the Hamiltonian flow in both sheets of the characteristic set, hence in \textit{opposite} directions with respect to time, the inequalities required for propagation in the two sheets are incompatible. Therefore, to deduce from \Cref{thm:propagation} a global statement, one needs to use \textit{variable-order} Sobolev spaces, whose orders $m,\mathsf{s}$ are allowed to vary smoothly on ${}^{\desc}\overline{T}^* \bbO$. 
See \cite{VasyGrenoble} for precise definitions in the sc- setting, which directly carry over to the de,sc-setting. 
Here, it suffices to restrict attention to $m,s_\bullet$ constant in a neighborhood of each component of the characteristic set.\footnote{This is different from how variable orders are usually used in microlocal analysis, where the orders vary \emph{within} each component of the characteristic set, so as to be below threshold on one radial set and above threshold on the other radial set in the same component. The reason this is not necessary here (nor in \cite{sussman2023massive}) is that a pentuple $\mathsf{s}\in \bbR^5$ consisting of constant $s_\bullet$ already functions like a variable decay order which jumps discontinuously  when moving from one boundary hypersurface to another.} 
The result is the following:
\begin{theorem}
\label{thm:variable-order}
	Fix \emph{nonreal} $\lambda\in \bbC$, $m\in C^\infty({}^{\desc}\overline{T}^* \bbO ;\bbR)$, and $\mathsf{s}\in C^\infty({}^{\desc}\overline{T}^* \bbO ;\bbR^5)$. Assume that $\Im(\lambda)>0$ and that $m$ as well as each $s_\bullet$ are constant in a neighborhood of each connected component of $\Sigma\cap\mathrm{df}$. Suppose that the following inequalities are satisfied:
	\begin{itemize}
	\item $m<1+s_{\mathrm{nPf}}$ near $\calN^{\pm}_-$ and $m>1+s_{\mathrm{nPf}}$ near $\calN^{\mp}_-$;
	\item $m<1+2s_{\mathrm{Pf}}-s_{\mathrm{nPf}}$ near $\calC^{\pm}_-$ and $m>1+2s_{\mathrm{Pf}}-s_{\mathrm{nPf}}$ near $\calC^{\mp}_-$;
	\item $m>1+2s_{\mathrm{Sf}}-s_{\mathrm{nPf}}$ near $\calK^{\pm}_-$ and $m<1+2s_{\mathrm{Sf}}-s_{\mathrm{nPf}}$ near $\calK^{\mp}_-$;
	\item $m<1/2+s_{\mathrm{nPf}}-s_{\mathrm{Sf}}$ near $\calA^{\pm}_-$ and $m>1/2+s_{\mathrm{nPf}}-s_{\mathrm{Sf}}$ near $\calA^{\mp}_-$;
	\item $m>1/2+s_{\mathrm{nFf}}-s_{\mathrm{Sf}}$ near $\calA^{\pm}_+$ and $m<1/2+s_{\mathrm{nFf}}-s_{\mathrm{Sf}}$ near $\calA^{\mp}_+$;
	\item $m<1+2s_{\mathrm{Sf}}-s_{\mathrm{nFf}}$ near $\calK^{\pm}_+$ and $m>1+2s_{\mathrm{Sf}}-s_{\mathrm{nFf}}$ near $\calK^{\mp}_+$;
	\item $m>1+2s_{\mathrm{Ff}}-s_{\mathrm{nFf}}$ near $\calC^{\pm}_+$ and $m<1+2s_{\mathrm{Ff}}-s_{\mathrm{nFf}}$ near $\calC^{\mp}_+$;
	\item $m>1+s_{\mathrm{nFf}}$ near $\calN^{\pm}_+$ and $m<1+s_{\mathrm{nFf}}$ near $\calN^{\mp}_+$.
	\end{itemize}
	Then, if $u\in \calS'(\bbR^{1,d})$ solves $Pu +\lambda u = f$ for $f\in H_{\desc}^{m,\mathsf{s}}$, we have $u\in H_{\desc}^{m+1,\mathsf{s}-1}$.
\end{theorem}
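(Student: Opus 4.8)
The plan is to run, in the variable-order microlocal calculus of \cite{VasyGrenoble} (transplanted from the sc- to the de,sc-setting), the same concatenation of real-principal-type propagation estimates and radial-point estimates that proves \Cref{prop:main}. Fix the sign so that $\pm\Im\lambda>0$, the two cases being mirror images under reversing the direction of propagation, and note the goal is to show $\WF^{m+1,\mathsf{s}-1}_{\desc}(u)=\varnothing$, equivalently $u\in H^{m+1,\mathsf{s}-1}_{\desc}$. First I would reduce to $\Sigma\cap\mathrm{df}$: since $f\in H^{m,\mathsf{s}}_{\desc}\subseteq H^{m-1,\mathsf{s}-1}_{\desc}$, we have $\WF^{m-1,\mathsf{s}-1}_{\desc}(Pu+\lambda u)=\varnothing$, so the elliptic estimate \cref{eq:elliptic-reg} --- which holds with arbitrary smooth variable orders, as away from $\Sigma\cap\mathrm{df}$ the operator $P+\lambda$ is elliptic in a manner insensitive to the weight --- gives $\WF^{m+1,\mathsf{s}-1}_{\desc}(u)\subseteq\Sigma\cap\mathrm{df}$. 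It then suffices to clear $\Sigma\cap\mathrm{df}$. Since $u$ is tempered, $u\in H^{m_0,\mathsf{s}_0}_{\desc}$ for some constant orders $(m_0,\mathsf{s}_0)$, so, exactly as in the proof of \Cref{prop:main}, one runs an inductive bootstrap improving the Sobolev index by a half-order at a time so that the a priori regularity needed to justify each commutator identity (and to control each error term $\langle Fu,u\rangle$) is always available; I suppress this loop.

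\textbf{Variable orders and complex absorption.} From the proof of \Cref{prop:main}, complex absorption makes \cite[Props.\ 5.6, 5.7, 5.10--5.14]{sussman2023massive} available for $P+\lambda$ provided one propagates control \emph{along} $\mathsf{H}_p$ when $\Im\lambda>0$ (and against it when $\Im\lambda<0$); the sign of $\Im\lambda$ enters only through the term $-2\Im(\lambda)\lVert\check{A}u\rVert^2$, which can be discarded in exactly that direction. Two features of the variable-order calculus of \cite{VasyGrenoble} then carry over verbatim to the de,sc-setting: (i) real-principal-type propagation of $\WF^{m+1,\mathsf{s}-1}_{\desc}$ along a segment of $\mathsf{H}_p$ is valid provided $m$ and each component of $\mathsf{s}$ is non-increasing along that segment --- which the monotonicity hypothesis supplies in a neighborhood of $\Sigma\cap\mathrm{df}$ --- and (ii) the radial-point estimate at each of $\calN,\calC,\calK,\calA$ requires only that the corresponding threshold inequality hold on a neighborhood of that radial set, rather than globally as in \cref{eq:ineq}. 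The eight bulleted inequalities of the theorem are precisely these eight localized threshold conditions: one orientation of each in the sheet $\Sigma^{\pm}$ (the ``$\pm$'' bullets) and the reverse orientation in $\Sigma^{\mp}$ (the ``$\mp$'' bullets), exactly mirroring parts (1) and (2) of \Cref{prop:main}.

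\textbf{Execution.} Assume $\Im\lambda>0$ and work in $\Sigma^+$. Following the ordered list in the proof of \Cref{prop:main} step for step: feed the elliptic control already available in the fiber interiors over $\mathrm{nPf}$ into $\calN^+_-$ using \cite[Prop.\ 5.10]{sussman2023massive} (whose threshold is the ``$m<1+s_{\mathrm{nPf}}$ near $\calN^{\pm}_-$'' bullet), then march through $\calC^+_-,\calK^+_-,\calA^+_-,\calA^+_+,\calK^+_+,\calC^+_+,\calN^+_+$, alternating the variable-order form of the relevant cited estimate at each radial set with real-principal-type propagation along the connecting bicharacteristic arcs. Non-trapping, together with the ordering of the radial sets along $\mathsf{H}_p$ in \Cref{fig:grand}, ensures each arc is crossed in a single fixed direction, so the monotonicity hypothesis is exactly what is needed to propagate. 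This yields $\WF^{m+1,\mathsf{s}-1}_{\desc}(u)\cap\Sigma^+\cap\mathrm{df}=\varnothing$; running the mirror chain in $\Sigma^-$, where $\mathsf{H}_p$ traverses time in the opposite sense and the ``$\mp$'' bullets supply the thresholds, gives the same over $\Sigma^-$. With the reduction step this gives $\WF^{m+1,\mathsf{s}-1}_{\desc}(u)=\varnothing$, i.e.\ $u\in H^{m+1,\mathsf{s}-1}_{\desc}$; specializing to large constant orders recovers \Cref{thm:main}.

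\textbf{Main obstacle.} The delicate part is the combinatorics of the orders at the saddle-type radial sets $\calC,\calK,\calA$ and the extended sets $\calN$ (which are saddles for the full flow): one must verify that the prescribed monotonicity of $(m,\mathsf{s})$ along $\mathsf{H}_p$ is simultaneously consistent with propagating \emph{into} each such set from its stable manifold and \emph{out} of it along its unstable manifold, and that the neighborhood on which each bulleted inequality is posited is large enough to contain the microsupport of the commutant built in the corresponding estimate of \cite[\S5]{sussman2023massive}. Granting the variable-order versions of those estimates --- which, as in \cite{VasyGrenoble}, follow from the fixed-order proofs by letting the order symbol vary and checking that the extra term produced in $H_p a$ by the variation carries the sign that lets it be absorbed, precisely when the order is monotone in the propagation direction --- the remainder is a faithful transcription of the proof of \Cref{prop:main}.
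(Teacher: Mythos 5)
Your proposal is correct and takes essentially the same route as the paper: the paper proves \Cref{thm:variable-order} exactly as you do, by the elliptic reduction to $\Sigma\cap\mathrm{df}$ followed by the same ordered concatenation of the complex-absorption-adjusted propagation and radial-point estimates used for \Cref{prop:main}, upgraded to variable orders as in \cite{VasyGrenoble} under the monotonicity hypothesis, with the bulleted inequalities playing the role of the localized threshold conditions on each sheet. The only difference is presentational: the paper compresses this into a remark deferring to the variable-order versions of the cited estimates, whereas you spell out the chain and the bootstrap.
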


\begin{remark}
	The hypothesis that $m,s_\bullet$ be constant near each connected component of the characteristic set can likely be removed, as long as they are monotonic (with the ``right'' sign\footnote{This just means that we can propagate strong regularity/decay to weaker regularity/decay but not vice versa.}) under the Hamiltonian flow. However, the proof would necessarily require some technical modifications, like an adaptation of the sharp G{\aa}rding inequality to the de,sc- setting.
\end{remark}

\begin{remark}
Variable $s_\bullet$ is required to make the sharpest possible statements; however, since the relevant systems of inequalities have solutions with all orders arbitrarily high, there are refinements of \Cref{thm:main} that only assume that $f$ lies in some constant order de,sc-Sobolev space. For example, consider what regularity of $f$ is sufficient to guarantee $u\in H_{\desc}^{m,\mathsf{s}}$. We can find constant orders $(m_1,\mathsf{s}_1)$ and $(m_2,\mathsf{s}_2)$ which satisfy the propagation inequalities of \Cref{thm:propagation}.(1), \Cref{thm:propagation}.(2) respectively and such that $m_1,m_2\geq m$ and $\mathsf{s}_1,\mathsf{s}_2\geq \mathsf{s}$. Then for any constant orders $m_0\geq  \max\{m_1,m_2\}$ and $\mathsf{s}_0\geq\max\{\mathsf{s}_1,\mathsf{s}_2\}$, it follows from \Cref{thm:propagation} that $Pu+\lambda u\in H_{\desc}^{m_0-1,\mathsf{s}_0+1}$ implies $u\in H_{\desc}^{m,\mathsf{s}}$.
\end{remark}

\begin{remark}
One can reformulate \Cref{thm:variable-order} as an invertibility statement for $(P+\lambda): \mathcal{X}^{m,\mathsf{s}}\to\mathcal{Y}^{m,\mathsf{s}}$, where
\begin{equation}
\mathcal{X}^{m,\mathsf{s}}=\{u\in H_{\desc}^{m+1,\mathsf{s}-1} : (P+\lambda)u \in H_{\desc}^{m,\mathsf{s}}\},
\hspace{30pt}
\mathcal{Y}^{m,\mathsf{s}}= H_{\desc}^{m,\mathsf{s}}
\end{equation}
with orders satisfying the inequalities above; cf. \cite[Cor.\ 4]{vasy2020essential}. This provides a precise meaning to $(P\pm i\varepsilon)^{-1}$ independently of the functional calculus for self-adjoint operators. In addition, the signs in $\pm i\varepsilon$ distinguishing the Feynman and anti-Feynman propagators correspond to opposite monotonicity and threshold conditions of $m$, $\mathsf{s}$ in \Cref{thm:variable-order}. 
\end{remark}

\section{Astrophysical examples}
\label{sec:examples}
We first discuss a non-radiative example that could have been discussed in \cite{vasy2020essential} but was not. Recall that the Schwarzschild metric $g_{\mathrm{Schw}}=g_{\mathrm{Schw},\frakm}$ with mass $\frakm>0$ is defined by 
\begin{equation}
	g_{\mathrm{Schw}} =  -\Big(1 - \frac{2\frakm}{r} \Big) \dd t^2 +  \Big(1- \frac{2\frakm}{r} \Big)^{-1} \dd r^2 + r^2(\!\dd \theta^2 + (\sin \theta)^2 \dd \phi^2)  
\end{equation}
on $\bbR_t\times \bbR_{r>2\frakm}\times \bbS^2_{\theta,\phi}\hookrightarrow \bbR^{1,3}_{t,\bfx}$ (the ``Schwarzschild exterior'').
\begin{example}
	\label{ex:astro}
	Consider an astrophysical body -- e.g.\ the Earth, or a star -- not so dense so as to form an event horizon. 
	Suppose that it has a finite lifetime, and that rather than losing mass by the emission of some kind of massless radiation, it disintegrates, expelling mass at subluminal velocities. Likewise, it is to be formed by the accretion of matter moving at subluminal velocities.
	A reasonable model for the gravitational field thus generated is a Lorentzian metric $g$ on $\bbR^{1,3}$ such that $g$ is a stationary asymptotically Schwarzschild metric at large distances, $r\gg 1$ (more precisely, near spacelike infinity and null infinity), and asymptotes to the Minkowski metric at large times, $t\gg 1$. More precisely, let us suppose that there exist some $C>C_0>1$ such that
	\begin{enumerate}[label=(\roman*)]
		\item in $r>\max\{r_0,|t|/C\}$, the metric $g$ has the form 
		\begin{equation}
			g =  -\Big(1 - \frac{2\frakm}{r} \Big) \dd t^2 +  \Big(1- \frac{2\frakm}{r} \Big)^{-1} \dd r^2 + r^2(\!\dd \theta^2 + (\sin \theta)^2 \dd \phi^2) + r^{-2} h
			\label{eq:misc_zzz}
		\end{equation}
		where $\frakm \geq 0$ is the mass of the body, $r_0$ is chosen to be bigger than its radius, $(r,\theta,\phi)$ are spherical coordinates on $\bbR^3$, and $h$ is a symmetric two-tensor on spacetime whose coefficients in the usual Cartesian basis are time-independent symbols of order zero on the spatial slices,
		\item in the region $|t| \geq \max\{1,C_0 r\}$, the metric $g$ differs from the Minkowski metric $g_{\bbM}$ by an $O(t^{-1})$ error:
		\begin{equation}
			g - g_{\bbM} \in |t|^{-1} S^0( \bbM; {}^{\mathrm{sc}} \operatorname{Sym}^2 T^* \bbM ) = |t|^{-1} ( S^0(\bbM) \dd t^2 + \dots )
		\end{equation}
		there. (Note that this is compatible with \cref{eq:misc_zzz} on the overlap of the two regions.)
	\end{enumerate}
	See \Cref{fig:examples}(a). 
	The requirement in (ii) that $g$ be a sc- two-tensor near $\mathrm{cl}_\bbO\{r=0\}$ is a disguised finite lifetime condition, because static perturbations of Minkowski are singular at the north/south poles $\mathrm{cl}_\bbO\{r=0\} \cap \mathrm{Ff}$, $\mathrm{cl}_\bbO\{r=0\} \cap \mathrm{Pf}$ of the spacetime, so not sc- tensors. Roughly, we can think of the astrophysical body as forming via the accretion of matter moving at subluminal velocities and disintegrating via a similar process.

	Unfortunately if $\frakm\neq 0$, then $g$ does \emph{not} satisfy \cref{eq:metric_ass_gen}. The reason is that, despite being suppressed by a power of $r$, the tensor
	$r^{-1} (\!\dd t^2 + \dd r^2)$ has the same order of decay as $\dd t^2 - \dd r^2 \in S^{\mathsf{0}}(\bbO; {}^{\mathrm{de,sc}}\operatorname{Sym}^2 T^* \bbO )$ at null infinity when viewed as a de,sc- two-(co)tensor: 
	\begin{equation}
		r^{-1}  (\!\dd t^2 + \dd r^2) \in \varrho_{\mathrm{Pf}} \varrho_{\mathrm{Sf}} \varrho_{\mathrm{Ff}} S^{\mathsf{0}}(\bbO; {}^{\mathrm{de,sc}}\operatorname{Sym}^2 T^* \bbO ) 
	\end{equation}
	(note the absence of $\varrho_{\mathrm{nPf}},\varrho_{\mathrm{nFf}}$). (Closely related points are discussed in \cite[\S1]{sussman2023massive}.)
	So, despite appearances, the terms in \cref{eq:misc_zzz} involving the mass count as large perturbations at null infinity. They show up in the \emph{principal} symbol of $\square_g$. 
	The moral is that, while arbitrary $O(r^{-2})$ terms in the metric (times $\dd t^2$, $\dd t \dd x_j$, $\dd x_j^2$, $\dd x_j \dd x_k$) are fine, arbitrary $O(r^{-1})$ terms are not.
	
	The problem is that \cref{eq:bbO-def} is the ``wrong'' compactification
	of spacetime --- points in the interior of null infinity are distinguished by different values of $v=|t|-r$. But, the physically/geometrically-relevant notion of null infinity is the one where the points in the interior are the limit points of different null geodesics. On the massive spacetimes above, these are distinguished by different values of $v_*=|t|-r_*$, where $r_*$ is the Eddington--Finkelstein tortoise coordinate, $r_* = r + 2 \frakm \ln (r-2\frakm)$. Null geodesics are approximately contained in level sets of $v_*$. (This would be exact if $h=0$. Even if $h\neq 0$, the approximation gets better as $r\to\infty$.) So, following a null geodesic, 
	\begin{equation} 
	v = v_*+ \frakm \ln (r-2\frakm)\to\infty .
	\end{equation} 
	The null geodesics of $g$ therefore tend to the timelike corners of $\bbO$, as depicted in \Cref{fig:examples}(b). \emph{The points in the interior of $\bbO$'s null infinity are not the limit points of different null geodesics.}
	
	However, this can be remedied by a slight modification of the compactification.
	Fix $\psi,\chi \in C^\infty_{\mathrm{c}}(\bbR)$ such that $\psi(s)=\operatorname{sign}(s)$ identically on a small neighborhood of $\{s=\pm 1\}$, $\psi(s)=0$ identically near $s=0$, and $\chi(s)=1$ identically near $s=0$. Fix $\digamma>0$. 
	Now let
	\begin{equation} 
		\tilde{t} = t - 2 \psi\Big(\frac{t}{r}\Big)\chi\Big(\frac{\digamma}{r^2+t^2} \Big) \frakm \ln (r-2\frakm),
	\end{equation} 
	Thus, $\tilde{t}$ is the temporal tortoise coordinate near null infinity, lagging slightly behind $t$ (or ahead of $t$, if $t<0$), but agrees with $t$ elsewhere.  Note that level sets of $v_*$ are equal to level sets of $|\tilde{t}| - r$ near null infinity.
	
	Let $j:\bbR^{1,d}\to\bbR^{1,d}$ be defined by $j:(t,\bfx)\mapsto (\tilde{t},\bfx)$. This is a diffeomorphism of $\bbR^{1,d}$ if $\digamma$ is sufficiently large.\footnote{An alternative compactification involves keeping the temporal coordinate the same and replacing $\bfx$ with $\tilde{\bfx} = r_*\bfx /r$ near null infinity. This also works as far as the rest of our discussion is concerned, but the computations involve more (sufficiently fast decaying to be unimportant) error terms.}
	Now, instead of the basic octagonal compactification $\iota_0:\smash{\bbR^{1,d}}\hookrightarrow \bbO$ in \cref{eq:bbO-def}, consider the composition 
	\begin{equation}
		\iota: \bbR^{1,d}\overset{j}{\hookrightarrow} \bbR^{1,d} \overset{\iota_0}{\hookrightarrow} \bbO,
		\label{eq:smart_comp}
	\end{equation}
	\emph{This} is the correct compactification. 
	Indeed, the pushforward $j_* g$ of $g$ by $j$ satisfies \cref{eq:metric_ass_gen}. The reason is that
	$g_{\mathrm{Schw}}$ can be written, near null infinity, as 
	\begin{equation}
		g_{\mathrm{Schw}} = - \Big(1 - \frac{2\frakm}{r} \Big) \dd v^2_* - 2  \dd v_*\dd r + r^2 (\!\dd \theta^2 + (\sin \theta)^2 \dd \phi^2).
		\label{eq:Vaidya}
	\end{equation}
	Near null infinity, $j_* g_{\mathrm{Schw}}$ is given by replacing $v_*:\bbR^{1,d}\to \bbR$ by $v$ in \cref{eq:Vaidya}. This is the passage to Eddington--Finkelstein coordinates.
	This means that $j_* g_{\mathrm{Schw}}$ differs from the Minkowski metric by $2\frakm r^{-1} \dd v^2$.
	It turns out that $\dd v$ has one order of decay at null infinity as a de,sc-one form:\footnote{This means that the main term in \cref{eq:Vaidya} involving $\dd v_*$ is $-2 \dd v_* \dd r$, not $\dd v_*^2$.}
	\begin{equation}
		\dd v \in \varrho_{\mathrm{nPf}}\varrho_{\mathrm{nFf}} S^{\mathsf{0}}(\bbO\backslash \!\operatorname{cl}_\bbO\{rt=0\};{}^{\mathrm{de,sc}}T^* \bbO ) .
		\label{eq:misc_8j1}
	\end{equation}
	Consequently, $r^{-1} \dd v^2$ has \emph{four} orders of decay at null infinity, and one order at each of $\mathrm{Pf},\mathrm{Sf},\mathrm{Ff}$. So, not only does $j_* g_{\mathrm{Schw}}$ satisfy \cref{eq:metric_ass_gen}, it does so with room to spare.

	If we also assume that $g$ is non-trapping, then $j_* g$ is as well, and so $j_* g$ satisfies all of our requirements.  One then easily deduces 
	the essential self-adjointness of the Klein--Gordon operator $\square_g + \mathsf{m}^2$ and Taira's mapping property from the corresponding results for $\square_{ j_* g} + \mathsf{m}^2 = j_{*}(\square_g + \mathsf{m}^2 )$. (For Taira's mapping property, this uses that a distribution
	$w\in \calD'(\bbR^{1,d})$ is tempered if and only if $j_* w$ is, and likewise with ``tempered'' replaced by ``Schwartz.'')

	For simplicity, we assumed above that the term $h$ in \cref{eq:misc_zzz} is time-independent. However, the same analysis applies if $h$ is time-dependent, as long as it is well-behaved with respect to the the compactification $\iota$ in \cref{eq:smart_comp}: 
	\begin{equation}
		h \in  j^* S^{\mathsf{0}} (\bbO; {}^{\mathrm{de,sc}} \operatorname{Sym}^2 T^* \bbO ).
	\end{equation}
	In other words, $h$ is a zeroth-order symbol on the octagonal compactification constructed using $(\tilde{t},\bfx)$ in place of $(t,\bfx)$. Then, the essential self-adjointness of the Klein--Gordon operator and Taira's mapping property follow, as above.
\end{example}

\tikzset{snake it/.style={decorate, decoration=snake}}
\begin{figure}[t]
	\begin{center}
		\begin{tikzpicture}
			\draw[dashed] (-2,2) -- (2,2) -- (2,-2) -- (-2,-2) -- cycle;
			\node () at (-1.6,1.6) {(a)};
			\fill[lightgray!30] (0,0) circle (1.7);
			\begin{scope}
				\clip  (0,0) circle (1.7);
				\fill[darkred!40] (1.7,1.9) -- (1,1.9) -- (.3,.5) -- (.3,-.5) -- (1,-1.9) -- (1.7,-1.9) -- cycle;
				\fill[darkred!40] (-1.7,1.9) -- (-1,1.9) -- (-.3,.5) -- (-.3,-.5) -- (-1,-1.9) -- (-1.7,-1.9) -- cycle;
				\fill[darkblue, opacity=.3] (-1.6,1.9) -- (-.1,.2) -- (.1,.2) -- (1.6,1.9) -- cycle;
				\fill[darkblue, opacity=.3] (-1.6,-1.9) -- (-.1,-.2) -- (.1,-.2) -- (1.6,-1.9) -- cycle;
				\node[darkred] () at (1.3,0) {(i)};
				\node[darkblue] () at (0,1.3) {(ii)};
				\draw[dashed] (-2,2) -- (2,-2);
				\draw[dashed] (2,2) -- (-2,-2);
			\end{scope}
			\draw (0,0) circle (1.7);
		\end{tikzpicture}
		\begin{tikzpicture}
			\draw[dashed] (-2,2) -- (2,2) -- (2,-2) -- (-2,-2) -- cycle;
			\node () at (-1.6,1.6) {(b)};
			\begin{scope}[scale=.65]
			\coordinate (one) at (.413\octheight,\octheight) {};
			\coordinate (two) at (\octheight,.413\octheight) {}; 
			\coordinate (three) at (\octheight,-.413\octheight) {};
			\coordinate (four) at (.413\octheight,-\octheight) {};
			\coordinate (five) at (-.413\octheight,-\octheight) {};
			\coordinate (six) at (-\octheight,-.413\octheight) {};
			\coordinate (seven) at (-\octheight,.413\octheight) {}; 
			\coordinate (eight) at (-.413\octheight,\octheight) {};
			\draw[fill=gray!10] (one) -- (two) -- (three) -- (four) -- (five) -- (six) -- (seven) -- (eight) -- cycle;
			\node (Ff) at (.1,1.2\octheight) {$\mathrm{Ff}$};
			\node (nFf) at (1.0\octheight,.81\octheight) {$\mathrm{nFf}$};
			\node (Sf) at (1.2\octheight,0) {$\mathrm{Sf}$};
			\node (Pf) at (.1,-1.2\octheight) {$\mathrm{Pf}$};
			\node (nPf) at (1.0\octheight,-.81\octheight) {$\mathrm{nPf}$};
			\draw[orange] plot [smooth] coordinates { (four) (.6\octheight,-.5\octheight) (-.5\octheight,.5\octheight) (eight)};
			\begin{scope}
				\clip (one) -- (two) -- (three) -- (four) -- (five) -- (six) -- (seven) -- (eight) -- cycle;
				\draw[gray, dashed] (-2.1,1.9) -- (2.1,-1.7);
			\end{scope}
			\draw (one) -- (two) -- (three) -- (four) -- (five) -- (six) -- (seven) -- (eight) -- cycle;
			\end{scope} 
		\end{tikzpicture}
		\begin{tikzpicture}
			\draw[dashed] (-2,2) -- (4,2) -- (4,-2) -- (-2,-2) -- cycle;
			\node () at (-1.6,1.6) {(c)};
			\begin{scope}[scale=.7]
				\coordinate (one) at (.413\octheight,\octheight) {};
				\coordinate (two) at (\octheight,.413\octheight) {}; 
				\coordinate (three) at (\octheight,-.413\octheight) {};
				\coordinate (four) at (.413\octheight,-\octheight) {};
				\coordinate (five) at (-.413\octheight,-\octheight) {};
				\coordinate (six) at (-\octheight,-.413\octheight) {};
				\coordinate (seven) at (-\octheight,.413\octheight) {}; 
				\coordinate (eight) at (-.413\octheight,\octheight) {};
				\draw[fill=gray!10] (one) -- (two) -- (three) -- (four) -- (five) -- (six) -- (seven) -- (eight) -- cycle;
				\begin{scope}
					\clip (one) -- (two) -- (three) -- (four) -- (five) -- (six) -- (seven) -- (eight) -- cycle;
					\fill[darkblue, opacity=.15] (-2.2,1.3) to[out=-30,in=210] (2.2,1.3) -- (2.2,-1.3) to[out=150,in=30] (-2.2,-1.3) -- cycle ;
					\fill[darkred, opacity=.3] (-2.2,2.3) to[out=-40,in=220] (2.2,2.3) -- cycle;
					\fill[darkred, opacity=.3] (-2.2,-2.3) to[out=40,in=-220] (2.2,-2.3) -- cycle;
					\fill[gray!10] (.1,-1.2) -- (.1,1) -- (-.1,1) -- (-.1,-1.2) -- cycle;
					\fill[gray!10] (-.3,2.5) -- (.3,2.5) -- (.1,1.2) -- (-.1,1.2) -- cycle;
					\fill[gray!10] (-.3,-2.5) -- (.3,-2.5) -- (.1,-1.2) -- (-.1,-1.2) -- cycle;
					\draw[orange, snake it] (-.1,1) to[out=170, in=-55] (-2,1.8);
					\draw[orange, snake it] (-.1,1.1) to[out=170, in=-55] (-2,1.9);
					\draw[orange, snake it] (.1,1) to[out=10, in=235] (2,2);
					\draw[orange, snake it] (.1,1.1) to[out=10, in=235] (2,2.1);
					\draw[orange, snake it] (-.1,-1) to[out=190, in=55] (-2,-1.8);
					\draw[orange, snake it] (-.1,-1.1) to[out=190, in=55] (-2,-1.9);
					\draw[orange, snake it] (.1,-1) to[out=-10, in=115] (2,-2);
					\draw[orange, snake it] (.1,-1.1) to[out=-10, in=115] (2,-2.1);
				\end{scope}
				\draw (one) -- (two) -- (three) -- (four) -- (five) -- (six) -- (seven) -- (eight) -- cycle;
				\draw[darkred, dashed] (.8,1.9) -- (1.5,1.9) node[right] {$g\cong g_{\mathrm{Schw},M_{\mathrm{F}}}$};
				\draw[darkred, dashed] (.8,-1.8) -- (1.5,-1.8) node[right] {$g=g_{\mathrm{Schw},M_{\mathrm{I}}}$};
				\draw[gray, dashed] (0,-1.8) -- (0,-2.5) node[right] {accretion};
				\draw[gray, dashed] (0,1.8) -- (0,2.5) node[right] {disintegration};
				\node[darkblue!70] () at (3.7,0) {$g\cong g_{\mathrm{Schw},M}$};
			\end{scope} 
		\end{tikzpicture}
	\end{center}
	\caption{(a) The regions (i), (ii) discussed in \Cref{ex:astro}, as seen in $\bbM=\overline{\bbR^{1,3}}$, with the light cone drawn as a dashed ``X''; (b) a null geodesic of Schwarzschild $g_{\mathrm{Schw}}$, in orange, tending to the timelike corners of the compactification \cref{eq:bbO-def}. This means that the notion of null infinity supplied by this compactification is not the physically/geometrically relevant notion of null infinity. We would instead like the geodesic to look like the gray dashed line, as the null geodesics of Minkowski do. This is what the alternative compactification in \cref{eq:smart_comp} accomplishes. Finally, (c) shows the structure of the Vaidya-like metric $g$ discussed in \Cref{ex:vaidya}. The gray region is where the metric is isometric to something other than a Schwarzschild metric. The radiation being emitted is shown. The gray area around $\{r=0\}$ is where the astrophysical body is.
	}
	\label{fig:examples}
\end{figure}

In the previous example, the spacetime had constant mass near null infinity. The mass cannot be converted to energy and lost, except by subluminal effects (e.g. disintegration). In contrast:

\begin{example}[Vaidya-like metrics]
	\label{ex:vaidya}
	We now discuss spacetimes that model the gravitational fields around spherically symmetric astrophysical bodies (with finite lifetimes, and without event horizons) that emit and absorb \emph{null dust}, a certain kind of massless radiation. 
	We begin with the Vaidya metric \cite{Vaidya}. This is the metric on 
	\begin{equation} 
		\bbR_{v}\times (0,+\infty)_r\times\bbS^2_{\theta,\phi} = \bbR_v\times (\bbR^3\backslash \{0\})_\bfx
	\end{equation} 
	given by taking the Schwarzschild metric in Eddington--Finkelstein coordinates (i.e.\ the right-hand side of \cref{eq:Vaidya}, with $v$ in place of $v_*$) and promoting $\frakm$ to a function of $v$: 
	\begin{equation}
		g_{\mathrm{Vaidya}} = - \Big(1 - \frac{2\frakm(v)}{r} \Big) \dd v^2 - 2  \dd v \dd r + r^2 (\!\dd \theta^2 + (\sin \theta)^2 \dd \phi^2).
		\label{eq:real_Vaidya}
	\end{equation}
	The Einstein tensor of $g_{\mathrm{Vaidya}}$ is null, which is why the Vaidya metric is used to describe the gravitational field generated by a spherically symmetric object which emits null dust  \cite[\S9.5]{griffiths2009exact}.
	While \cref{eq:real_Vaidya} guarantees good behavior of outgoing null geodesics, it is not obvious whether the metric is well-behaved in the past. (See \cite{C-N-Vaidya} for an analysis of incoming radial null geodesics under particular assumptions.) Moreover, something about the $v\to\infty$ behavior of $\frakm(v)$ must be assumed if the metric is to be well-behaved at timelike infinity. We assume that, outside of some compact subset of $\bbR_v$, the function $\frakm(v)$ is constant. So, there exist $M\geq M_{\mathrm{F}} >0$ and real numbers $v_1>v_0$ such that  $\frakm(v)=M$ if $v<v_0$ and $\frakm(v) =M_{\mathrm{F}}$ if $v>v_1$.
	So, the object generating the gravitational field is only emitting null dust for a finite amount of time.
	From the formula \cref{eq:Vaidya} for the Schwarzschild metric in Eddington--Finkelstein coordinates, we see that $g_{\mathrm{Vaidya}}$ is isometric to mass-$M$ Schwarzschild in $v<v_0$ and isometric to mass-$M_{\mathrm{F}}$ Schwarzschild in $v>v_1$.  
	
	We can interpret $g_{\mathrm{Vaidya}}$ as a metric on $\bbR^{1,3}_{t,\bfx}\backslash \{r=0\}$ by introducing $t(v,r)=v+r$. In terms of the coordinate system $(t,r,\theta,\phi)$, 
	\begin{equation}
		g_{\mathrm{Vaidya}}  = g_\bbM+ \frac{2\frakm(v)}{r} (\!\dd t - \dd r)^2 = g_\bbM+ \frac{2\frakm(t-r)}{r} (\!\dd t - \dd r)^2,
	\end{equation}
	where $g_\bbM$ is the Minkowski metric. Since $\dd t-\dd r$ is $O(\varrho_{\mathrm{nFf}} )$ as a de,sc- one-form (\cref{eq:misc_8j1}), we conclude that, for any $c\in (0,1)$ and $r_0>0$, 
	\begin{equation}
		g_{\mathrm{Vaidya}}-g_\bbM \in \varrho_{\mathrm{Sf}}\varrho_{\mathrm{nFf}}^4\varrho_{\mathrm{Ff}} C^\infty(\bbO \cap \{t>-cr-1, r>r_0\} ; {}^{\mathrm{de,sc}}\operatorname{Sym}^2 T^* \bbO ).
		\label{eq:misc_053}
	\end{equation}
	Importantly, 
	\begin{equation} 
			g_{\mathrm{Vaidya}} = g_\bbM + 2 M r^{-1} (\!\dd t-\dd r)^2
	\end{equation} 
	in some neighborhood in $\bbO$ of $\mathrm{cl}_\bbO \{t=0\} \backslash \{r<r_0\}$, $r_0\gg 1$. So, $g_{\mathrm{Vaidya}}$ is isometric to mass-$M$ Schwarzschild there (via Eddington--Finkelstein coordinates). We can therefore glue together one copy of the Vaidya spacetime and one time-reversed copy (with some mass $M_{\mathrm{I}}\leq M$ in place of $M_{\mathrm{F}}$). The resulting spacetime $(\bbR^{1,d},g_0)$ is (outside of some big ball $\{r<r_0\}$) isometric to mass-$M_{\mathrm{I}}$ Schwarzschild near $\mathrm{Pf}$, mass-$M$ Schwarzschild near $\mathrm{Sf}$, and mass-$M_{\mathrm{F}}$ Schwarzschild near $\mathrm{Ff}$. It follows from \cref{eq:misc_053} that 
	\begin{equation}
		g_{0}-g_\bbM \in \varrho_{\mathrm{Pf}}\varrho_{\mathrm{nPf}}^4 \varrho_{\mathrm{Sf}}\varrho_{\mathrm{nFf}}^4\varrho_{\mathrm{Ff}} C^\infty(\bbO \cap \{r>r_0\} ; {}^{\mathrm{de,sc}}\operatorname{Sym}^2 T^* \bbO ).\label{eq:misc_054}
	\end{equation}
	
	Now, $g_0$ is singular at $r=0$, but this should not bother us, since the Vaidya metric is only to be taken seriously as a physical model for $r\gg 1$, because it does not describe the metric inside of the object that is emitting the null dust. So, let $g$ denote a metric on $\bbR^{1,3}$ that is equal to $g_0$ outside of some neighborhood in $\bbO$ of $\mathrm{cl}_\bbO \{r=0\}$ not intersecting $\mathrm{nPf}\cup\mathrm{Sf}\cup \mathrm{nFf}$. 
	Moreover, suppose that, away from null infinity, $g$ differs from $g_\bbM$ by an $O((r^2+t^2)^{-1/2})$ scattering metric. (That is, its coefficients when written in Cartesian coordinates are elements of $S^{-1}(\bbM)$, smooth at the boundary.\footnote{The smoothness assumption is not necessary here.}) 
	Then, \cref{eq:misc_054} implies 
	\begin{equation}
		g-g_\bbM \in \varrho_{\mathrm{Pf}}\varrho_{\mathrm{nPf}}^4 \varrho_{\mathrm{Sf}}\varrho_{\mathrm{nFf}}^4\varrho_{\mathrm{Ff}} C^\infty(\bbO ; {}^{\mathrm{de,sc}}\operatorname{Sym}^2 T^* \bbO ).\label{eq:misc_055}
	\end{equation}
	So, $g$ is asymptotically Minkowski in the sense that we require.
	That is, \cref{eq:metric_ass_gen} holds. The essential self-adjointness of the Klein--Gordon operator and Taira's mapping property follow, assuming that $g$ satisfies the non-trapping assumption.
	
	The structure of the Vaidya-like metric $g$ is depicted in \Cref{fig:examples}(c). 
\end{example}

Let us close by remarking that we are forced to impose finite lifetime assumptions in \Cref{ex:astro}, \Cref{ex:vaidya} because we cannot, using the microlocal tools discussed in \S\ref{sec:statement}, \S\ref{sec:proof},  handle asymptotically static spacetimes that are not asymptotically Minkowski. This is precisely the sort of assumption that the work of Baskin--Doll--Gell-Redman in \cite{3bodysc,3bodysc-Feynman} should dispose of when combined with our de,sc-analysis here.

\appendix

\section{Deficiency index theory as applied to $\square_g$}
\label{sec:deficiency}
To complete our exposition, we include the proof of the following elementary proposition, which is used to deduce \Cref{thm:main_0} from Taira's mapping property, \Cref{thm:main}:
\begin{proposition}
	Let $g,P$ be as in \S\ref{sec:statement}. Then, $P$ is essentially self-adjoint on $C_{\mathrm{c}}^\infty(\bbR^{1,d})$ with respect to the $L^2(\bbR^{1,d},g)$-inner product if and only if the only $u\in L^2$ satisfying $Pu=\pm iu$ is $u=0$, for both choices of sign. 
	\label{prop:deficiency}
\end{proposition}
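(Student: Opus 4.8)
The plan is to reduce the statement to the classical von Neumann deficiency-index criterion; the only thing requiring an argument is the identification of the Hilbert-space adjoint of $P$ (restricted to $C_{\mathrm{c}}^\infty$) with the distributional action of $P$ on its natural domain.

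First I would let $P_0$ denote $P$ regarded as an unbounded operator on $L^2 = L^2(\bbR^{1,d},g)$ with domain $\operatorname{dom}(P_0) = C_{\mathrm{c}}^\infty(\bbR^{1,d})$. Since $C_{\mathrm{c}}^\infty(\bbR^{1,d})$ is dense in $L^2$ and is contained in $\calS(\bbR^{1,d})$, the symmetry of $P$ on $\calS$ noted in \S\ref{sec:statement} restricts to symmetry of $P_0$, so $P_0$ is densely defined and symmetric. The von Neumann criterion then says that $P_0$ is essentially self-adjoint if and only if both deficiency indices vanish, i.e.\ $\ker(P_0^\ast - i) = \ker(P_0^\ast + i) = \{0\}$. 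So it remains to show $\ker(P_0^\ast \mp i) = \{u \in L^2 : Pu = \pm i u\}$ for both signs.

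The key step is the computation of $P_0^\ast$. By the definition of the adjoint, $u \in \operatorname{dom}(P_0^\ast)$ with $P_0^\ast u = w$ precisely when $u, w \in L^2$ and $\langle u, P\phi\rangle_{L^2(g)} = \langle w, \phi\rangle_{L^2(g)}$ for every $\phi \in C_{\mathrm{c}}^\infty(\bbR^{1,d})$. Now $P$ is a differential operator with smooth coefficients whose formal adjoint with respect to $\langle \cdot,\cdot\rangle_{L^2(g)}$ equals $P$ itself --- this is exactly what the symmetry of $P$ on $\calS$ (equivalently $C_{\mathrm{c}}^\infty$) amounts to, a differential operator being determined by its action on test functions --- so the left-hand side is by definition the distributional pairing $\langle Pu, \phi\rangle_{L^2(g)}$, where $Pu \in \calD'(\bbR^{1,d})$ is the distributional image of $u \in L^2 \subset \calS'(\bbR^{1,d})$. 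Hence $\operatorname{dom}(P_0^\ast) = \{u \in L^2 : Pu \in L^2\}$ with $P_0^\ast u = Pu$ computed distributionally, and therefore $\ker(P_0^\ast \mp i) = \{u \in L^2 : Pu = \pm i u \text{ in } \calD'(\bbR^{1,d})\}$. Combining with the previous paragraph gives the proposition.

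I do not expect a genuine obstacle: this is routine functional analysis. The only points deserving care are (i) checking that the formal-adjoint identity really turns $\langle u, P\phi\rangle_{L^2(g)}$ into the distributional pairing $\langle Pu, \phi\rangle$, for which one tracks the conjugations in the sesquilinear $L^2(g)$ inner product and uses that symmetry forces the formal $L^2(g)$-adjoint of $P$ to be $P$; and (ii) observing that membership in $\operatorname{dom}(P_0^\ast)$ is tested against $C_{\mathrm{c}}^\infty$, which is exactly the space of test functions, so no regularization or density argument is needed to pass to the distributional equation. One may additionally remark --- though it is not required for the proposition as stated --- that running the same computation with $\calS(\bbR^{1,d})$ in place of $C_{\mathrm{c}}^\infty(\bbR^{1,d})$ yields the same adjoint, so essential self-adjointness on $C_{\mathrm{c}}^\infty$ and on $\calS$ are equivalent, which is what lets \Cref{thm:main} (stated with domain $\calS$) imply \Cref{thm:main_0} (stated with domain $C_{\mathrm{c}}^\infty$).
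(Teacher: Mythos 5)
Your proposal is correct and is essentially the paper's own argument: the paper invokes the same standard criterion (density of $\operatorname{Ran}_{C_{\mathrm{c}}^\infty}(P\pm i)$, equivalently vanishing of $\operatorname{Ran}(P\pm i)^\perp=\ker$ of the adjoint shifted by $\mp i$) and the identical key step of converting $\langle u,(P\pm i)v\rangle_{L^2(g)}$ into the distributional pairing $\langle (P\mp i)u,v\rangle$ using the symmetry of $P$ on test functions. The only cosmetic difference is that you phrase this as an explicit computation of $P_0^\ast$ and cite von Neumann's deficiency-index theorem, while the paper works directly with orthogonal complements of the ranges; the content is the same.
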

\begin{proof}
	By \cite[Chp.\ VIII \S2]{RS1}, essential self-adjointness is equivalent to $\operatorname{Ran}_{C_{\mathrm{c}}^\infty}(P\pm i)$ being dense in $L^2$ for both choices of sign, which is equivalent to the subspace \begin{equation} 
		\operatorname{Ran}_{C_{\mathrm{c}}^\infty}(P\pm i)^\perp \subset L^2
	\end{equation} 
	being $\{0\}$.

	For any $w\in \calD'(\bbR^{1,d})$ and $v\in C_{\mathrm{c}}^\infty(\bbR^{1,d})$, we can interpret $\langle w,v \rangle_{L^2}=\langle v,w\rangle^*_{L^2}\in \bbC$ as a distributional pairing, 
	\begin{equation}
		\langle w,v \rangle_{L^2} = w(v^*).
	\end{equation}
	The map $\langle -,- \rangle : \calD'(\bbR^{1,d})\times C_{\mathrm{c}}^\infty(\bbR^{1,d})\to \bbC$ is 
	jointly continuous.
	This implies (by the symmetry of $P$ on test functions and the density of test functions) that 
	\begin{equation} 
		\langle Pw,v \rangle_{L^2}= \langle w,Pv \rangle_{L^2}
	\end{equation} 
	for all $w\in \calD'(\bbR^{1,d})$ and $v\in C_{\mathrm{c}}^\infty(\bbR^{1,d})$.
	Then, we can compute  
	\begin{align}
		\begin{split} 
			u\in \operatorname{Ran}_{C_{\mathrm{c}}^\infty}(P\pm i)^\perp &\iff \langle u, (P\pm i) v \rangle_{L^2} = 0\text{ for all }v\in C_{\mathrm{c}}^\infty(\bbR^{1,d}) \\ 
			&\iff \langle (P\mp i) u,  v \rangle_{L^2} = 0\text{ for all }v\in C_{\mathrm{c}}^\infty(\bbR^{1,d})  \\
			&\iff (P\mp i)u=0.
		\end{split} 
	\end{align}
	So, $\operatorname{Ran}_{C_{\mathrm{c}}^\infty}(P\pm i)^\perp = \{0\}$ if and only if the only $u\in L^2$ satisfying $Pu=\pm iu$ is $u=0$, for both choices of sign.
\end{proof}

\section{Proof of Taira's mapping property for the exact Minkowski case}
\label{sec:constant_coeff}

For the exact Minkowski D'Alembertian $\square=\partial_t^2 - \smash{\sum_{j=1}^d \partial_{x_j}^2} \in \operatorname{Diff}^2(\bbR^{1,d}_{t,x})$, Taira's mapping property says that, if $\lambda \in \bbC\backslash \bbR$, then, if $f\in \calS(\bbR^{1,d})$,  
\begin{equation}
	u\in \calS'(\bbR^{1,d}), \square u = \lambda u + f \Longrightarrow u\in \calS(\bbR^{1,d}).
	\label{eq:Taira_Mink}
\end{equation}

\begin{proof}[Proof of \cref{eq:Taira_Mink}]	
	Letting $\calF u(\tau,\xi)$ denote the Fourier transform of $u$, where $\tau\in \bbR$ is the coordinate dual to $t$ and $\xi\in \bbR^d$ is dual to $x=(x_1,\dots,x_d)$, $\square u = \lambda u + f$ is equivalent to $-(\tau^2 - \xi^2 + \lambda) \calF u = \calF f(\tau,\xi)$. So, 
	\begin{equation}
		u(t,x) =- \calF^{-1}_{\tau\to t,\xi \to x} \Big( \frac{\calF f(\tau,\xi)}{\tau^2-\xi^2 + \lambda}  \Big) ;
		\label{eq:misc_044}
	\end{equation}
	note that the division by $\tau^2-\xi^2 + \lambda$ is well-defined because $\lambda\notin \bbR$. 
	Because $f\in \calS(\bbR^{1,d})$, we also have $\calF f\in \calS(\bbR^{1,d})$. It follows that $(\tau^2 - \xi^2 + \lambda)^{-1} \calF f$ is Schwartz. Indeed, hitting this with a constant-coefficient differential operator, the result is a linear combination of functions of the form 
	\begin{equation}
		\frac{p(\tau,\xi) \partial_\tau^j \partial_\xi^\alpha  \calF f }{(\tau^2-\xi^2+\lambda)^k } 
	\end{equation} 
	for some $j,k\in \bbN$, multi-index $\alpha$, and polynomial $p$. Since $\lambda\notin \bbR$, we have, for any $N\in \bbR$, 
	\begin{equation}
		\Big\lVert (1+\tau^2+\xi^2)^N \frac{p(\tau,\xi) \partial_\tau^j \partial_\xi^\alpha  \calF f }{(\tau^2-\xi^2+\lambda)^k }  \Big\rVert_{L^\infty} \leq \frac{1}{|\Im \lambda|^k } \lVert (1+\tau^2+\xi^2)^{N_0}  \partial_\tau^j \partial_\xi^\alpha  \calF f  \rVert_{L^\infty} < \infty ,
	\end{equation}
	for some $N_0$. So, $(\tau^2 - \xi^2 + \lambda)^{-1} \calF f$ is indeed Schwartz.

	So, if follows from \cref{eq:misc_044} that $u$ is Schwartz.
\end{proof}



\printbibliography

\end{document}